\theoremstyle{plain}
\newtheorem{theorem}{Theorem}
\newtheorem{proposition}{Proposition}[section]
\newtheorem{lemma}[proposition]{Lemma}
\theoremstyle{definition}
\newtheorem{remark}{Remark}[section]
\numberwithin{equation}{section}
\newcommand{\grad}{\nabla}
\newcommand{\R}{\mathbb{R}}
\newcommand{\N}{\mathbb{N}}
\newcommand{\C}{\mathbb{C}}
\newcommand{\Z}{\mathbb{Z}}
\newcommand{\T}{\mathbb{T}}
\newcommand{\dee}{\mathrm{d}}
\newcommand{\dt}{\dee t}
\newcommand{\dx}{\mathrm{d} x}
\newcommand{\dy}{\dee y}
\newtheorem*{lemma*}{Lemma}
\theoremstyle{definition}
    \def\XXint#1#2#3{{\setbox0=\hbox{$#1{#2#3}{\int}$}
         \vcenter{\hbox{$#2#3$}}\kern-.5\wd0}}
\begin{document}

 \title{Norm Growth, Non-uniqueness, and Anomalous Dissipation in Passive Scalars}
	\author{Tarek M. Elgindi and Kyle Liss}
	
	\maketitle

\begin{abstract}
We construct a divergence-free velocity field $u:[0,T] \times \T^2 \to \R^2$ satisfying $$u \in C^\infty([0,T];C^\alpha(\T^2)) \quad \forall \alpha \in [0,1)$$ such that the corresponding drift-diffusion equation exhibits anomalous dissipation for every smooth initial data. We also show that, given any $\alpha_0 < 1$, the flow can be modified such that it is uniformly bounded only in $C^{\alpha_0}(\T^2)$ and the regularity of solutions satisfy sharp (time-integrated) bounds predicted by the Obukhov-Corrsin theory. The proof is based on a general principle implying $H^1$ growth for all solutions to the transport equation, which may be of independent interest.
\end{abstract}

\section{Introduction}

We consider the advection-diffusion equation on $\T^2 = \R^2 \slash (2\pi\Z^2)$:

\begin{equation} \label{eq:ADE}
\begin{cases}
\partial_t f^\kappa + u \cdot \grad f^\kappa = \kappa \Delta f^\kappa, \\ 
f^\kappa|_{t=0} = f_0.
\end{cases}
\end{equation}
Here, for some time $T > 0$, $u:[0,T]\times \T^2 \to \R^2$ is a given divergence-free velocity field, $f^\kappa:[0,T]\times \T^2 \to \R$ is a passive scalar representing, for instance, temperature or concentration, $\kappa > 0$ is a small constant, and $f_0 \in L^2$ is a mean-free initial data. The vector field $u$ may be prescribed as a solution to some hydrodynamical equation, like the Euler or Navier-Stokes equations, or it may simply be imposed. Since $u$ is divergence free, the $L^2$ energy of a solution to \eqref{eq:ADE} is monotone decreasing and governed by the energy balance
\begin{equation} \label{eq:energybalance}
\frac{d}{dt} \|f^\kappa(t)\|_{L^2}^2 = -2\kappa \|\grad f^\kappa(t)\|_{L^2}^2,
\end{equation}
or equivalently 
\begin{equation}
\|f^\kappa(t)\|_{L^2}^2=\|f^\kappa(0)\|_{L^2}^2-2\kappa\int_{0}^t\|\grad f^\kappa(s)\|_{L^2}^2\dee s,
\end{equation}
on any interval $[0,t]$ where $f^\kappa$ is sufficiently smooth. In particular, the quantity \[\mathcal{E}_\kappa(t):=2\kappa\int_{0}^t\|\grad f^\kappa(s)\|_{L^2}^2 \dee s\] determines the energy dissipation of a solution. The size of this quantity is, in turn, related to the distribution of the solution in Fourier space or its average length scale in physical space. 

Even though the velocity field does not enter directly into \eqref{eq:energybalance}, it generally plays an important role in the rate of energy dissipation. Indeed, advection typically contributes to the formation of small spatial scales and consequently enhances the $L^2$ decay of the scalar. In fact, if $u$ is rougher than Lipschitz (as is expected in regimes of turbulent advection), then it is possible for this effect to be so dramatic that a fixed amount of dissipation can occur with arbitrarily small diffusion and in a $\kappa$-independent length of time. That is, one can have \textit{anomalous dissipation}:
\begin{equation}\label{Anomaly}\liminf_{\kappa\rightarrow 0} \mathcal{E}_{\kappa}(t)=c_0>0
\end{equation}
for some $t > 0$. The term anomalous dissipation refers to how \eqref{Anomaly} implies that solutions to \eqref{eq:ADE} dissipate energy at a rate that is independent of the diffusivity constant $\kappa>0.$ The presence of anomalous dissipation in the advection-diffusion and Navier-Stokes equations is a central assumption in the phenomenological theories of turbulence, playing a fundamental role in the Obukhov-Corrsin theory for passive scalars  \cite{Obukhov49, Corrsin51,Sr19, SS2000} as well as the K41 hydrodynamical theory \cite{K41a, K41b, Frisch}. The predicted dissipation anomalies for both turbulent fluids and scalars advected by them are well supported by numerics and experiments \cite{Sr19, DSrY05, Kaneda03, Pearson02}.

It is easy to achieve anomalous dissipation mathematically if the initial $f_0=f_0^\kappa$ become rough when $\kappa\rightarrow 0$ (even with $u\equiv 0$) and it is also easy to achieve when the velocity field becomes unbounded pointwise, even if $f_0$ is fixed. Certainly, if $f_0$ is chosen independent of $\kappa>0$ and if $u$ is smooth, \eqref{Anomaly} is impossible for finite $t>0.$ This can be seen easily since smoothness of $u$ propagates smoothness of $f_0$ ($L^2$ compactness of the solution being the property of interest). Anomalous dissipation is similarly impossible when $u$ belongs to certain DiPerna-Lions classes \cite{DL} or, more generally, when the transport equation (\eqref{eq:ADE} with $\kappa=0$) has unique $L^2$ solutions. 

An example of a deterministic velocity field that exhibits (genuine) anomalous dissipation for a large class of initial data was constructed in \cite{DEIJ22} using alternating ``sawtooth'' shear flows. The main purpose of this paper is to revisit the idea of \cite{DEIJ22} and show that alternating shears can in fact be used to achieve anomalous dissipation \emph{for all} sufficiently smooth initial data, while at the same time improving upon the regularity of the earlier construction.

\subsection{Main results}

Our first result is an explicit example of a divergence-free velocity field $u:[0,T] \times \T^2 \to \R^2$ that is uniformly smooth in time in $C^\alpha(\T^2)$ for every $\alpha < 1$ and for which the solution of \eqref{eq:ADE} exhibits anomalous dissipation for every mean-zero and smooth initial data.

\begin{theorem} \label{thrm:main}
Fix $T > 0$. There exists a divergence-free velocity field $u:[0,T] \times \T^2 \to \R^2$ with
\begin{equation}\label{eq:generalizedregularity}
    u \in C^\infty([0,T]; C^\alpha(\T^2)) \quad \forall \alpha \in (0,1)
\end{equation}
such that, for every mean-zero and smooth initial data $f_0,$ the solutions $f^\kappa$ exhibit anomalous dissipation on $[0,T]$. Solutions to the corresponding transport equation are non-unique while $u$ satisfies \begin{equation}\label{eq:velocityregularity}|u(t,z)-u(t,z')|\leq C\omega(|z-z'|)\end{equation} for all $t\in [0,T]$ and $z,z'\in\mathbb{T}^2$, with $\omega(s)=s(1+|\log(s)|^4)$ and $C>0$ a universal constant.  
\end{theorem}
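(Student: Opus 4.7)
The plan is to construct $u$ as a smooth-in-time concatenation of alternating horizontal and vertical shear flows acting on a sequence of subintervals $[t_n,t_{n+1}]\subset[0,T]$ with $t_n\nearrow T$. On the $n$th interval I would take $u(t,x,y)=\chi_n(t)\,a_n\,\phi(k_n y)\,(1,0)$ or the analogous vertical shear, where $\chi_n$ is a smooth temporal bump supported in $[t_n,t_{n+1}]$, $\phi$ is a fixed smooth mean-zero sawtooth-like profile, and $(a_n,k_n)$ is a pair of amplitude and wavenumber parameters with $k_n\to\infty$ and $a_n\to 0$. Choosing $a_n k_n^\alpha\to 0$ for each $\alpha<1$ yields \eqref{eq:generalizedregularity}, while the quantitative modulus \eqref{eq:velocityregularity} is obtained by tuning $a_n, k_n$ so that $a_n k_n$ grows only polylogarithmically in $k_n$; splitting a Lipschitz-type partial sum over $k_n\leq 1/s$ from an $L^\infty$ tail over $k_n>1/s$ then produces a modulus of the form $s(1+|\log s|^4)$.

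The principal new ingredient, and the main obstacle, is a generalization of the $H^1$ growth mechanism of \cite{DEIJ22} so that it applies to \emph{every} smooth mean-zero initial datum rather than to a special class. The general principle I would try to establish is a quantitative shear amplification lemma: if $f$ is transported by the shear $(a\phi(ky),0)$ over a time window of length $\tau$ with $ak\tau\gtrsim 1$, and if a suitable horizontal Fourier projection of $f$ carries nontrivial $L^2$ mass, then $\|\nabla(f\circ X_\tau^{-1})\|_{L^2}$ exceeds the original $\|\nabla f\|_{L^2}$ by a multiplicative factor of order $ak\tau$, arising from the new $y$-derivative $-\tau a k\,\phi'(ky)\,\partial_x f$. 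The alternating structure ensures the projection hypothesis is applicable at every stage, because no nonzero smooth mean-zero function can be simultaneously constant along all horizontal fibers and all vertical fibers; the first shear whose hypothesis is satisfied initiates growth, and subsequent alternating shears amplify it geometrically. A Fourier analysis tracking how each shear translates the Fourier support of the solution by $\sim k_n$ in one variable converts this into a lower bound of the form $\|\nabla f(t_n)\|_{L^2}\gtrsim \big(\prod_{m\leq n} a_m k_m \tau_m\big)\|f_0\|_{L^2}$, modulo logarithmic corrections, which diverges as $n\to\infty$.

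Given this universal $H^1$ blow-up, anomalous dissipation is proven by a diffusive stopping-time argument. For each $\kappa>0$, one chooses the largest index $n^*(\kappa)$ such that the viscous solution $f^\kappa$ still tracks the pure transport on $[0,t_{n^*}]$; the selection condition has the form $\kappa\,a_n^2 k_n^2 \tau_n\lesssim 1$, reflecting that diffusion erases oscillations only once their frequency squared times the advection time overwhelms the inverse P\'eclet number. Up to time $t_{n^*}$, the $H^1$ norm of $f^\kappa$ has been amplified to an arbitrarily large prescribed level, so integrating \eqref{eq:energybalance} yields a $\kappa$-independent lower bound $\mathcal{E}_\kappa(T)\geq c_0>0$. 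Because $t_{n^*}\to T$ as $\kappa\to 0$, the accumulated dissipation lies in the full interval $[0,T]$, proving anomalous dissipation for every smooth mean-zero $f_0$.

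For non-uniqueness of the transport equation the construction itself supplies the witness: the inviscid transport preserves the $L^2$ norm of $f_0$ on $[0,T)$, whereas any subsequential vanishing-viscosity limit $f=\lim_{\kappa\to 0} f^\kappa$ has strictly smaller $L^2$ norm at time $T$ by the anomalous dissipation just established. Both objects are weak solutions of the transport equation on $[0,T]$, and they disagree at $t=T$, producing two distinct weak solutions with the same data. This is consistent with the failure of Osgood's criterion for $\omega(s)=s(1+|\log s|^4)$, since $\int_0^1 \mathrm{d}s/\omega(s)$ is finite, so characteristics are permitted to branch at the accumulation time $T$. The single most delicate step of the plan is the universality of the shear amplification lemma in the second paragraph; everything else reduces to careful parameter bookkeeping and standard energy estimates.
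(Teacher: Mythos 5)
Your overall architecture (alternating sawtooth shears with tuned amplitudes and frequencies, the modulus-of-continuity bookkeeping, non-uniqueness via the vanishing-viscosity limit losing energy) matches the paper, but the two steps you yourself flag as the heart of the matter both have genuine gaps. First, the universal $H^1$ growth. Your amplification lemma gives, at best, $\|\partial_y(f\circ X_\tau^{-1})\|_{L^2}\ge ak\tau\|\partial_x f\|_{L^2}-\|\partial_y f\|_{L^2}$, so the shear amplifies $\|\nabla f\|_{L^2}$ only when $\|\partial_x f\|_{L^2}$ is a definite fraction of $\|\nabla f\|_{L^2}$ --- not merely when ``a Fourier projection of $f$ carries nontrivial $L^2$ mass.'' For a general smooth datum one can have $\|\partial_y f_0\|_{L^2}\gg K_1\|\partial_x f_0\|_{L^2}$, and then the first shear (and a priori arbitrarily many subsequent ones) produces cancellation rather than growth. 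Your appeal to the fact that no mean-zero function is constant along all fibers is qualitative and yields no rate uniform over the data; it does not control how many steps are ``wasted'' before growth initiates, nor rule out that later shears undo earlier growth. The paper's missing idea here is the forwards--backwards principle (Lemma~\ref{fwdbckwdp} and its quantitative form Lemma~\ref{lem:fwdbwd}): the two-sided inequality $\|g\circ\Phi_{j+1}^{-1}\|_{\dot H^1}^2+\|g\circ\Phi_j\|_{\dot H^1}^2\ge cK_j^4\|g\|_{\dot H^1}^2$, verified by a pointwise lower bound on a $4\times 2$ matrix, turns $\{\|f_j\|_{\dot H^1}\}$ into a log-convex-type sequence; combined with the Poincar\'e floor $\|f_j\|_{\dot H^1}\ge\|f_0\|_{L^2}$ it forces growth to initiate after at most $\lceil\log(\|f_0\|_{\dot H^1}/\|f_0\|_{L^2})\rceil$ steps and to persist thereafter. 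Without some substitute for this, your induction does not close.

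Second, your passage from inviscid $H^1$ blow-up to anomalous dissipation is incomplete. A large $\|f^\kappa(t)\|_{H^1}$ at a stopping time does not by itself bound $\kappa\int_0^T\|\nabla f^\kappa\|_{L^2}^2\,\dt$ from below, and your selection condition $\kappa a_n^2k_n^2\tau_n\lesssim 1$ concerns the velocity's frequency, not the solution's. The criterion actually used (Proposition~\ref{prop:ADcriteria}, after \cite{DEIJ22}) requires, in addition to $\int_0^T\|\nabla f\|_{L^2}^2\,\dt=\infty$, the \emph{balanced growth} condition $\|f\|_{L^2}^{\sigma-1}\|f\|_{\dot H^\sigma}\le C\|f\|_{\dot H^1}^\sigma$ for some $\sigma>1$, i.e.\ a matching \emph{upper} bound on a higher Sobolev norm. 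This is what guarantees the $H^1$ mass of the inviscid solution is concentrated near frequency $\|f\|_{H^1}/\|f\|_{L^2}$, so that $L^2$-closeness of $f^\kappa$ to $f$ transfers a definite fraction of that mass to $f^\kappa$ and forces dissipation. Your proposal contains no upper bound on any norm above $H^1$; proving one (the paper's Lemma~\ref{lem:uppermain}, via Kato--Ponce commutators and interpolation, with the constant controlled by $\|f_0\|_{W^{1,\infty}}/\|f_0\|_{L^2}$) is a substantive piece of the argument, not bookkeeping, and it is also where the restriction on the parameters (e.g.\ $N_j\prod_{n<j}K_n^{-2}\to 0$ fast enough) enters.
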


\noindent We now make a few remarks on the above results. 

\begin{remark} (Modulus of continuity of the velocity field)
It is straightforward to see from the proof that the power 4 in the definition of $\omega$ can be replaced with any $p > 3$, but we do not do this for simplicity of presentation. Modifying slightly the proof, we can bring the power on the logarithm down to any $p>2$, at least for suitable initial data. It seems that our proof cannot go all the way down to the Osgood threshold. As anomalous dissipation is known to imply non-uniqueness for the underlying transport equation with $\kappa = 0$ (see e.g. \cite{DEIJ22}), an interesting open question is if for any non-Osgood modulus of continuity one can construct a velocity field enjoying that modulus of continuity uniformly in time and such that the corresponding drift-diffusion equation exhibits anomalous dissipation or such that the corresponding transport equation exhibits non-uniqueness.
\end{remark}

\begin{remark} (Regularity of the initial data) \label{rem:constants}
    It is not required in Theorem~\ref{thrm:main} that $f_0$ be smooth. All that we require is $f_0 \in H^{1+s} \cap W^{1,\infty}$ for some $s > 2/5$. Moreover, for a fixed such $s$, the amount of energy dissipated depends only on upper bounds for 
$$\|f_0\|_{W^{1,\infty}}/\|f_0\|_{L^2} \quad \text{and} \quad \frac{\|f_0\|_{L^2}^s \|f_0\|_{\dot{H}^{1+s}}}{\|f_0\|_{H^1}^{1+s}}. $$
For any $\delta > 0$ and with same proof, the requirement $s > 2/5$ can be weakened to $s>\delta$ by allowing the power 4 in the definition of $\omega$ to be sufficiently large depending on $\delta$. 
\end{remark}

\begin{remark} (Dimensions $d\geq 2$)
    Theorem~\ref{thrm:main} provides, as an immediate corollary, anomalous dissipation in any dimension $d \ge 2$. Indeed, one can lift the velocity field to $\T^d$, switching its orientation in space $d-1$ times as time evolves, to construct a divergence-free velocity field $u \in C^\infty([0,T]; C^\alpha(\T^d))$ that exhibits anomalous dissipation for every mean-zero $f_0 \in C^\infty(\T^d)$.
\end{remark}

\begin{remark} (Euler and Navier-Stokes)
Let us remark finally that the velocity field of Theorem \ref{thrm:main} solves the 2d Euler equation with a force that is uniformly smooth in time with values in $C^\alpha.$ This is simply because the velocity field is just a shear flow for each $t\in [0,T]$, so the force is just $\partial_t u$. Note that, upon inspecting the various parameters in the proof, it is easy to use this to construct a so-called $2\frac{1}{2}$-dimensional solution to the 3d Navier-Stokes system that gives anomalous dissipation. 
\end{remark}

By analogy with the scaling of velocity increments over the inertial range predicted by the K41 theory and its connection with the Onsager H\"{o}lder-1/3 regularity threshold for the conservation of kinetic energy in the Euler equations \cite{Eyink94, CET94, Ons49}, the scaling of structure functions over the inertial-convective range within the Obukhov-Corrsin theory of scalar turbulence \cite{Obukhov49,Corrsin51}
underlies a regularity threshold for anomalous dissipation in the advection-diffusion equation. Specifically, if the advecting velocity field satisfies $u \in L^\infty([0,T];C^\alpha)$ for some $\alpha \in [0,1)$ and the family of solutions $\{f^\kappa\}_{\kappa > 0}$ to \eqref{eq:ADE} remain uniformly bounded in $L^2([0,T];C^\beta)$, then heuristic scaling arguments suggest that 
\begin{equation} \label{eq:regthreshold}
    \limsup_{\kappa \to 0} \kappa \int_0^T \|\grad f^\kappa(s)\|_{L^2}^2\dee s = 0 \quad \text{unless} \quad \beta \le \frac{1-\alpha}{2}.
\end{equation}
This statement generalizes to different time integrability exponents (see e.g. the introduction of \cite{CCS}) and can be proven in a similar fashion to the rigidity side of Onsager's conjecture (see \cite{DEIJ22}).

Since the velocity field of Theorem~\ref{thrm:main} belongs to $L^\infty([0,T];C^\alpha(\T^2))$ for every $\alpha \in (0,1)$, it follows from \eqref{eq:regthreshold} that the associated solutions $f^\kappa$ cannot retain any degree of H\"{o}lder regularity (even in a time integrated sense) uniformly as $\kappa \to 0$. In our second result, we show that we can modify the velocity field from Theorem~\ref{thrm:main} so that it remains uniformly bounded only in some fixed H\"{o}lder class and the scalar regularity gets arbitrarily close to the threshold set by \eqref{eq:regthreshold}.

\begin{theorem} \label{thrm:2}
    Fix $T > 0$, $\alpha \in (0,1)$, and $\beta < (1-\alpha)/2$. There exists a divergence-free velocity field $u:[0,T] \times \T^2 \to \R^2$ with $u \in C([0,T];C^\alpha(\T^2))$ such that for every mean-zero $f_0 \in C^\infty(\T^2)$ the solutions $f^\kappa$ of \eqref{eq:ADE} exhibit anomalous dissipation on $[0,T]$ and satisfy 
    \begin{equation} \label{eq:scalarregularity}
    \sup_{\kappa \in [0,1] }\|f^\kappa\|_{L^2([0,T];C^\beta(\T^2))} < \infty.
    \end{equation}
\end{theorem}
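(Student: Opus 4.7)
The plan is to revisit the alternating-shears construction underlying Theorem~\ref{thrm:main} and re-tune the amplitudes and stage durations so that the velocity sits precisely at a prescribed H\"older regularity $C^\alpha$. I take a geometrically increasing sequence of frequencies $\lambda_n$ (say $\lambda_n = 2^n$) and disjoint intervals $I_n = [t_n, t_n + \tau_n] \subset [0,T]$ on which $u(t,\cdot)$ is a sawtooth shear of wavelength $\lambda_n^{-1}$ and amplitude $A_n$, alternating between the $x$- and $y$-directions as $n$ varies. The choice $A_n \simeq \lambda_n^{-\alpha}$ gives $\|u(t,\cdot)\|_{C^\alpha} \lesssim 1$ uniformly, while the natural mixing timescale $\tau_n \simeq (A_n \lambda_n)^{-1} \simeq \lambda_n^{\alpha-1}$ keeps $\sum_n \tau_n$ summable whenever $\alpha < 1$.

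Anomalous dissipation would then follow essentially as in Theorem~\ref{thrm:main}: applying the $H^1$-growth principle for the transport equation on each $I_n$, and cutting off at the first stage for which the diffusive scale $\sqrt{\kappa \tau_n}$ exceeds $\lambda_n^{-1}$, yields a $\kappa$-independent lower bound on $\mathcal{E}_\kappa(T)$ by summing contributions over the $N(\kappa) \to \infty$ stages that remain effectively inviscid. The new ingredient is the scalar regularity bound \eqref{eq:scalarregularity}. Since the scalar's finest characteristic length scale during stage $n$ should be no smaller than $\lambda_n^{-1}$, interpolating the $L^\infty$ maximum principle against a Bernstein-type bound at frequency $\lambda_n$ gives $\|f^\kappa(t)\|_{C^\beta} \lesssim \lambda_n^\beta$ on $I_n$; integrating in time,
\begin{equation}
\int_0^T \|f^\kappa(t)\|_{C^\beta}^2 \, dt \lesssim \sum_n \tau_n \lambda_n^{2\beta} \simeq \sum_n \lambda_n^{\alpha - 1 + 2\beta},
\end{equation}
which is summable precisely when $\beta < (1-\alpha)/2$, recovering the Obukhov-Corrsin threshold.

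The main technical obstacle will be justifying the estimate $\|f^\kappa(t)\|_{C^\beta} \lesssim \lambda_n^\beta$ uniformly in $t \in I_n$ rather than only at the endpoints, since throughout each stage the scalar is transitioning from oscillations at scale $\lambda_{n-1}^{-1}$ inherited from the previous stage to new oscillations at scale $\lambda_n^{-1}$ driven by the current shear, and intermediate filaments of mixed scales form. The natural route is to exploit the explicit flow map of each shear, which is affine in the transverse variable, to transport the $C^\beta$ bound from the start of the stage, and to estimate the newly generated oscillations directly from the $C^\alpha$ regularity of $u$ together with the short duration $\tau_n$. Smooth temporal cutoffs should be inserted between adjacent $I_n$ so that $u \in C([0,T]; C^\alpha(\T^2))$; since the quiet gaps between intervals can be made longer than $\tau_n$ without disturbing the time budget, such transitions are harmless for the estimates above.
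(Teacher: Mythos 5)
Your overall strategy (re-tune the alternating sawtooth shears so that $\|u(t,\cdot)\|_{C^\alpha}\lesssim 1$, rerun the $H^1$-growth machinery for dissipation, and control $\|f^\kappa(t)\|_{C^\beta}$ by interpolating a gradient bound against the $L^\infty$ maximum principle) is the same as the paper's. But there is a genuine gap in the parameter choice, and it is exactly the point where the paper has to work. With geometric frequencies $\lambda_n=2^n$, amplitudes $A_n=\lambda_n^{-\alpha}$, and durations $\tau_n\simeq (A_n\lambda_n)^{-1}$, the per-stage shear displacement $K_n=A_n\lambda_n\tau_n$ is $O(1)$. The entire $H^1$-growth mechanism (Lemma~\ref{lem:fwdbwd} and the forwards--backwards iteration) requires $K_n\ge M$ for a large absolute constant $M$, and the blow-up criterion requires $\sum_n \tau_n\prod_{m<n}K_m^4=\infty$; with $K_n\simeq 1$ this reduces to $\sum_n\tau_n=\infty$, contradicting $\sum_n\tau_n<\infty$. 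So as written there is no anomalous dissipation. If you repair this by taking $\tau_n = K(A_n\lambda_n)^{-1}$ with $K\ge M$, you break the other half: the scalar's gradient is amplified by a factor $K^2$ per stage (this is what the maximum-principle argument on $\partial_x f^\kappa,\partial_y f^\kappa$ actually gives — your ``finest length scale $\lambda_n^{-1}$'' heuristic conflates the shear wavelength with the scalar gradient), so after $n$ stages $\|\grad f^\kappa\|_{L^\infty}\lesssim K^{2n}\gg \lambda_n=2^n$. The regularity sum becomes $\sum_n \tau_n K^{4\beta n}\simeq\sum_n 2^{n(\alpha-1)}K^{4\beta n+1}$, which converges only if $K^{4\beta}<2^{1-\alpha}$, while divergence of the dissipation sum forces $K^4\ge 2^{1-\alpha}$. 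Since $K$ must exceed a large absolute constant, these pinch to a range of $\beta$ far below $(1-\alpha)/2$.

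The paper's resolution is to abandon geometric frequencies: it takes $N_j=j^{4j}$ (super-exponential), $\alpha_j=N_j^{-\alpha}$, and $t_j$ chosen so that $K_j=Mj^{2/(1-3\epsilon)}$ grows only polynomially in $j$. Then the accumulated amplification $\prod_{n\le j}K_n^2\approx (j!)^{4/(1-3\epsilon)}$ is $N_j^{1/(1-3\epsilon)}=N_j^{1+o(1)}$ as $\epsilon\to 0$, so the scalar gradient stays essentially at the shear frequency, and $\sum_j t_j\prod_{n\le j}K_n^{4\beta}$ converges for all $\beta<(1-\alpha)(1-3\epsilon)/2$, reaching the full Obukhov--Corrsin range. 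Your Bernstein-type frequency-localization step should also be replaced (as in the paper) by the maximum principle applied to the equations for $\partial_x f^\kappa$ and $\partial_y f^\kappa$, treating the shear-generated term as a forcing; the scalar is not band-limited, so the Bernstein heuristic is not directly justifiable, whereas the maximum-principle route gives the bound uniformly on each stage, including the intermediate times you flag as the main technical obstacle.
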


\subsection{Previous works and discussion}

We now provide an overview of known results concerning anomalous dissipation and discuss the present contribution within the context of the previous literature.

\subsubsection{Previous work} \label{sec:previousresults}

There have been a number of recent works that consider anomalous dissipation for the advection-diffusion equation with divergence free drift. The first example of a deterministic vector field that exhibits anomalous dissipation was given in \cite{DEIJ22}. Specifically, for any $\alpha \in (0,1)$ and $d\ge 2$, the authors construct a velocity field $u \in L^1([0,1];C^\alpha(\T^d)) \cap L^\infty([0,1]\times \T^d)$ that yields anomalous dissipation for a large class of data. The example is based on alternating approximately piece-wise linear shear flows with rapidly increasing frequencies up to a singular time. One can view the result of \cite{DEIJ22} as being a continuation of previous works on enhanced dissipation \cite{CKRZ,CZDE}. Building off the idea of \cite{DEIJ22}, Bru\'e and De Lellis \cite{BDL} exhibited an example of anomalous dissipation in the forced 3d Navier-Stokes equation. Thereafter, Colombo, Crippa, and Sorella \cite{CCS} revisited the problem of anomalous dissipation and made a number of new contributions. Namely, they showed that vanishing viscosity is \emph{not} a selection criterion for uniqueness in the transport equation. Moreover, they showed that the velocity field can be significantly more regular than the advertised regularity of  \cite{DEIJ22} while maintaining the corresponding sharp upper bounds on the scalar regularity. The example of \cite{CCS} is based on using examples of finite-time mixers constructed, for example, in \cite{ACM}. See also \cite{BCCDS,JS} for follow-up works. Very recently, Armstrong and Vicol \cite{AV} introduced a different mechanism for anomalous dissipation that is not based on a ``finite-time singularity" in the velocity field but on a continuous cascade to high frequencies in the advection-diffusion equation; this is made rigorous using ideas from quantitative homogenization. A key advance in the construction of \cite{AV} is that, for their choice of $u$, \emph{all} solutions to \eqref{eq:ADE} with sufficiently smooth initial data exhibit anomalous dissipation. Additionally, the dissipative anomaly is spread out over time; in particular, the energy of the solution is continuous uniformly in $\kappa$. Finally, in an interesting paper of Huysmans and Titi \cite{HT}, another example of anomalous dissipation is given based on mixing in finite time. A surprising consequence of the analysis in \cite{HT} is the existence of a solution to the transport equation with energy that jumps down and then up again, while also being a limit of vanishing viscosity. 

Anomalous dissipation has also been recently established by Bedrossian, Blumenthal and Punshon-Smith for passive scalars driven by a spatially smooth, white-in-time stochastic forcing and advected by velocity fields solving various stochastic fluid models \cite{BBPSBatchelor} (see also \cite{BBPS21, BBPS22} for the earlier works of the same authors on mixing and enhanced dissipation used importantly in \cite{BBPSBatchelor}). The velocity realizations are almost surely uniformly bounded in $C^1$ on every finite time interval, so anomalous dissipation in the sense of \eqref{Anomaly} is impossible. In this setting, anomalous dissipation refers to a constant, non-vanishing flux of scalar $L^2$ energy from low to high frequencies in statistical equilibrium and the convergence of solutions as $\kappa \to 0$ to a statistically stationary solution of the \emph{forced} transport equation that lives in a regularity class just below some Onsager critical space.

\subsubsection{Discussion}

Let us now take a moment to reflect upon the place of this work in the context of previous works. First, the construction we use here is based on alternating shear flows, inspired by \cite{DEIJ22}. While the velocity field is \emph{not} mixing, solutions to the transport equation do lose compactness in $L^2$ (see Figure~\ref{fig1}). Second, as compared to \cite{CCS}, we are able to recover the previous results on the regularity of the velocity field and the passive scalar and this is done \emph{for all} sufficiently smooth data. The velocity field constructed here is also more regular in space and time than the one constructed in \cite{AV}, where the velocity is $C_{t,x}^{1/3-}.$ We do not consider the question of selection or the lack of selection in the vanishing diffusivity limit. Also, since in the example we give the energy only dissipates at one point in time as $\kappa \to 0$, a drawback of our result is that the energy is discontinuous in the limit (as compared to \cite{AV}, where the energy dissipates continuously). It is possible that suitable modifications of our arguments could give continuous energy dissipation, though it seems to be more difficult to give a construction that yields continuous and strictly decreasing energy in the limit (we are not aware of a construction giving this latter property). 

\begin{figure}[h] 
\centering
\subfigure
{\includegraphics[width=5.25cm]{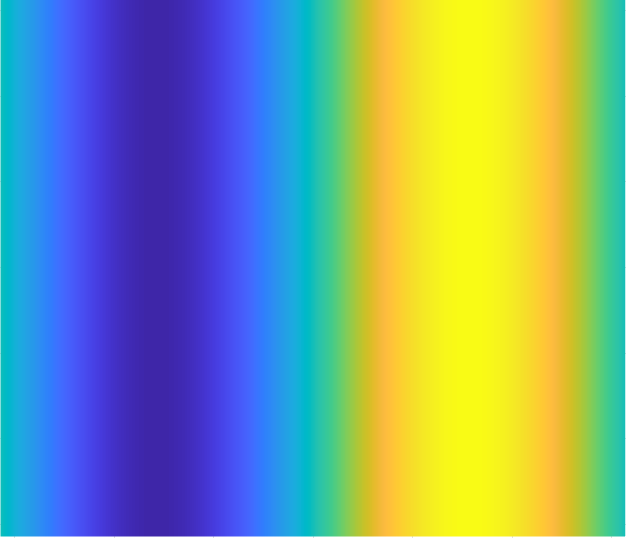}}
\subfigure{\includegraphics[width=5.25cm]{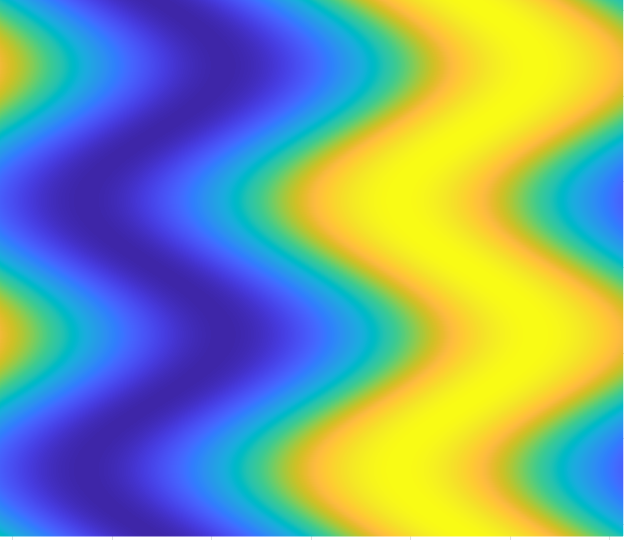}}
\subfigure{\includegraphics[width=5.25cm]{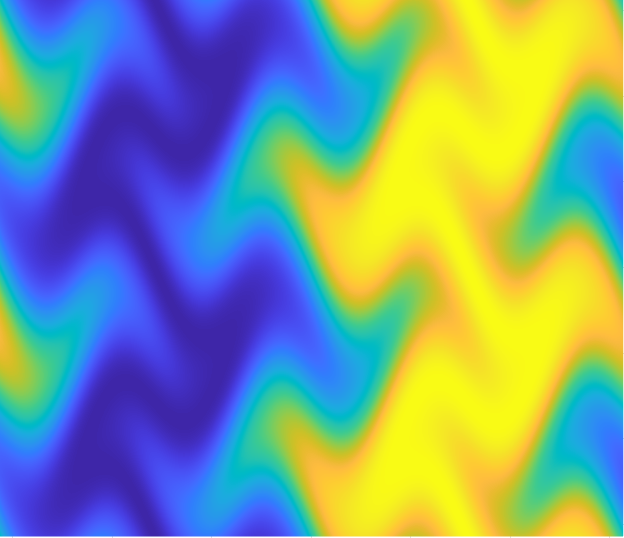}} \\
\subfigure{\includegraphics[width=5.25cm]{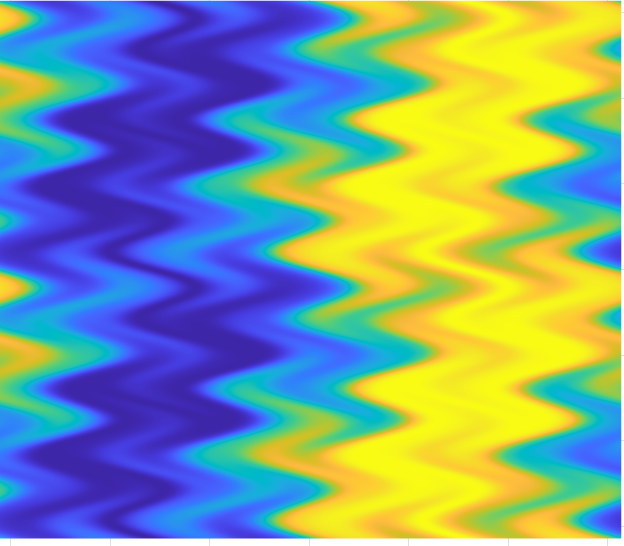}}
\subfigure{\includegraphics[width=5.25cm]{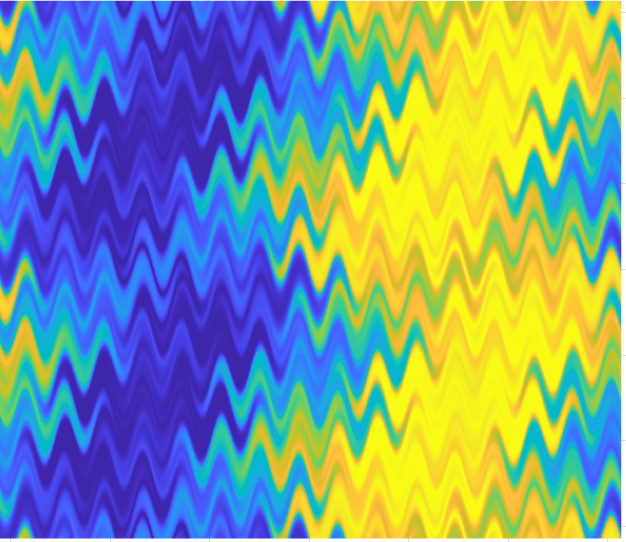}}
\subfigure{\includegraphics[width=5.25cm]{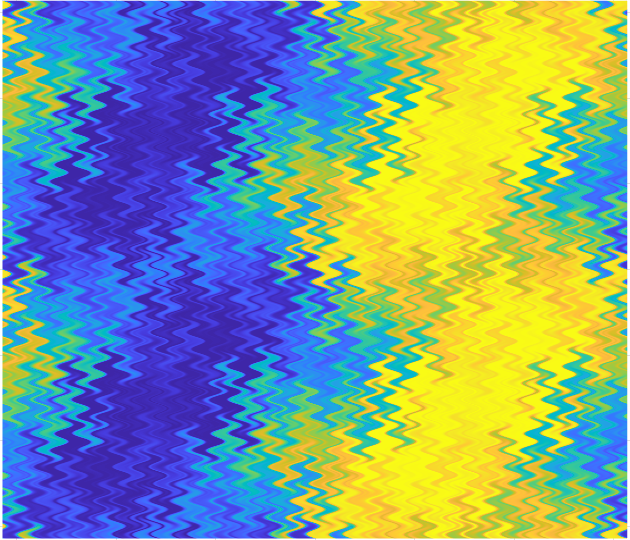}}
\caption{Numerical simulation of the transport equation~\eqref{eq:AE} with $u$ a regularized version of the alternating shear flow used in the proof of Theorem~\ref{thrm:main} and initial condition $f_0(x,y) = \sin(x)$, displayed in the upper left image. Moving from left to right, the scalar is shown after each successive shearing direction is applied.}
\label{fig1}
\end{figure}

\subsection{Main ideas of the proof}

The main idea of \cite{DEIJ22} was that anomalous dissipation (or sharp enhanced dissipation) in the advection-diffusion equation can be deduced under the condition that solutions to just the advection equation 
\begin{equation} \label{eq:AE}
    \begin{cases}
        \partial_t f + u \cdot \grad f = 0, \\ 
        f|_{t=0} = f_0
    \end{cases}
\end{equation}
satisfy a bound of the form
\begin{equation}\label{EnergySpectrum}\frac{\|\mathbb{P}_{>N}f\|_{L^2}}{\|f\|_{L^2}}\geq c\end{equation} with $N=c \|f\|_{H^1}$ for some fixed constant $c>0$ (where $\mathbb{P}_{>N}$ is a Fourier projection). Estimate \eqref{EnergySpectrum} implies that if the $H^1$ norm of $f$ becomes large, then the energy spectrum of $f$ contains some bump localized around frequencies comparable to $\|f\|_{H^1}.$ It is not difficult to show (see \cite{DEIJ22}) that the condition \begin{equation}\label{H1Growth}\lim_{t\rightarrow T_*}\int_0^{t} \|f\|_{H^1}^2=+\infty\end{equation} is \emph{equivalent} to anomalous dissipation when \eqref{EnergySpectrum} is satisfied for a uniform $c>0$ and all $t\in [0,T_*)$. The key is thus to construct a velocity field with the property that all solutions to \eqref{eq:AE} satisfy \eqref{EnergySpectrum}-\eqref{H1Growth}. It is easy to imagine that advecting a scalar by a rougher and rougher velocity field leads to unbounded $H^1$ growth, even fast enough to satisfy \eqref{H1Growth}, but it is not so clear how to construct that velocity field in such a way that this holds while also maintaining \eqref{EnergySpectrum} for \emph{every} smooth solution to \eqref{eq:AE}.  We achieve this by using a variant of the  following general lemma. 
\begin{lemma}\label{fwdbckwdp}(Forwards-Backwards Principle)
Assume that $\Phi$ is a volume preserving and bi-Lipschitz map of a smooth manifold $\Omega.$ Assume that for every mean-zero $f\in H^1(\Omega)$, we have that 
\begin{equation}\label{fwdbckwd}\frac{\|f\circ\Phi\|_{\dot{H}^1}^2+\|f\circ\Phi^{-1}\|_{\dot{H}^1}^2}{2}\geq K\|f\|_{\dot{H}^1}^2, \end{equation} for some $K>1.$ Then, there exists $c=c(f)$ so that 
\[\|f\circ\Phi^{n}\|_{H^1}\geq K^{|n|} c \|f\|_{H^1},\] for all $n\in\mathbb{Z}.$ 
\end{lemma}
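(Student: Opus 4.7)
The plan is to iterate the hypothesis \eqref{fwdbckwd} along the $\Phi$-orbit of $f$. Since $\Phi$ is volume-preserving, every iterate $f\circ\Phi^n$ is mean-zero, so applying \eqref{fwdbckwd} with $f\circ\Phi^n$ in place of $f$ and setting $a_n := \|f\circ\Phi^n\|_{\dot{H}^1}^2$ produces the second-order recurrence inequality
\begin{equation*}
a_{n+1}+a_{n-1}\geq 2K\, a_n, \qquad n\in\mathbb{Z}.
\end{equation*}
Volume-preservation also gives $\|f\circ\Phi^n\|_{L^2}=\|f\|_{L^2}$, so the Poincar\'e inequality (applied to the mean-zero $f\circ\Phi^n$) provides the uniform positive lower bound $a_n\geq \mu(f):=\|f\|_{L^2}^2/C_P^2>0$ for every $n\in\mathbb{Z}$.

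I would then extract two-sided exponential growth from the recurrence via its characteristic roots $\lambda_\pm := K\pm\sqrt{K^2-1}$ of $x^2-2Kx+1=0$, which satisfy $\lambda_+\lambda_-=1$ and $\lambda_+>1>\lambda_->0$. Setting $b_n := a_{n+1}-\lambda_- a_n$, a direct computation gives $b_{n+1}-\lambda_+ b_n = a_{n+2}-2Ka_{n+1}+a_n \geq 0$, so $b_{n+1}\geq \lambda_+ b_n$. If $b_n$ were non-positive for all $n\geq 0$, then $a_{n+1}\leq \lambda_- a_n$ would force $a_n\leq \lambda_-^n a_0\to 0$, contradicting $a_n\geq \mu > 0$; hence some $b_{n_0}>0$, and $b_{n_0+k}\geq \lambda_+^k b_{n_0}$ yields $a_n\geq c_+(f)\,\lambda_+^n$ for all $n\geq n_0$. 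The analogous backward argument, using $c_n := a_{n-1}-\lambda_- a_n$ (for which $c_{n-1}\geq \lambda_+ c_n$), produces $a_{-n}\geq c_-(f)\,\lambda_+^n$ for $n$ sufficiently large. Combined with $\|f\circ\Phi^n\|_{H^1}^2=\|f\|_{L^2}^2+a_n$ and the elementary comparison $\lambda_+\geq K^2$ (equivalent to $K^3\leq K^2+K+1$, the regime relevant for the lemma), this yields the claimed bound $\|f\circ\Phi^n\|_{H^1}\geq c(f)\,K^{|n|}\|f\|_{H^1}$ for all $n\in\mathbb{Z}$.

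The main obstacle is precisely the two-sided nature of the conclusion: the recurrence $a_{n+1}+a_{n-1}\geq 2Ka_n$ alone admits one-sided decaying solutions such as $a_n\propto \lambda_-^n$, and it is only the $L^2$-conservation of iterates---translated via Poincar\'e into the uniform bound $a_n\geq \mu > 0$---that forces the growing characteristic-root mode to be present in both time directions. Quantifying $c(f)$ in terms of explicit norms of $f$ amounts to tracking the thresholds $n_0$ in the forward and backward arguments and collapsing them into a constant of the form $c(f)\sim \|f\|_{L^2}/\|f\|_{H^1}$ up to universal factors depending only on $K$ and the Poincar\'e constant of $\Omega$.
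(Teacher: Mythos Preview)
Your argument is correct and rests on the same two pillars as the paper's: the Poincar\'e lower bound $a_n\ge \|f\|_{L^2}^2$ coming from volume preservation, and the second-order recurrence $a_{n+1}+a_{n-1}\ge 2K a_n$. The paper, however, does not invoke the characteristic roots $\lambda_\pm$; it argues more coarsely via a discrete-convexity/monotonicity step: once $a_{n_0}\ge a_{n_0-1}$ for some $n_0$ (which the Poincar\'e floor forces, since otherwise $a_n$ would decrease forever below $\|f\|_{L^2}^2$), the recurrence gives $a_{n+1}\ge (2K-1)a_n$ for all $n\ge n_0$, and symmetrically in the backward direction. Your decomposition $b_n=a_{n+1}-\lambda_- a_n$ is a sharper version of the same mechanism and yields the optimal base $\lambda_+$ rather than $2K-1$; both arguments are short and both locate the ``must eventually start growing'' moment via the Poincar\'e floor.

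One point worth flagging: your inequality $\lambda_+\ge K^2$ (equivalently $K^3\le K^2+K+1$) holds only for $K$ in a bounded interval near $1$, and the paper's cruder rate $2K-1$ is likewise $\le K^2$ for all $K>1$. So neither argument literally delivers the stated base $K^{|n|}$ for large $K$; the sharp conclusion in the $\dot H^1$ seminorm is $\lambda_+^{|n|/2}\sim (2K)^{|n|/2}$. This is not a defect in your proof---the lemma in the paper is presented as a motivating principle rather than a sharp quantitative statement, and its actual application (Lemma~\ref{lem:fwdbwd} and Lemma~\ref{lem:discretegrowth}) is reproved directly with the parameters needed there---but you should be aware that the exponent $K^{|n|}$ is somewhat schematic.
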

The proof simply follows from the fact that $\|f \circ \Phi^n\|_{\dot{H}^1} \ge \|f\|_{L^2}$ for every $n \in \Z$ by the Poincar\'{e} inequality and that if we have $\|f\circ\Phi^{-(n_0-1)}\|_{H^1}\geq \|f\circ \Phi^{-n_0}\|_{H^1}$ for some $n_0 \in \Z$, then the assumption implies that $\|f\circ\Phi^{-n}\|_{H^1}$ is increasing for $n\ge n_0$. 
Indeed, one can interpret \eqref{fwdbckwd} as a type of (discrete) convexity assumption on the sequence $\{\|f\circ\Phi^{n}\|_{H^1}\}_{n\in\mathbb{Z}}.$ A small technical difficulty in our proof is that we must apply a version of the above lemma to the composition of different mappings (but that all belong to some class allowing for similar argumentation). This idea is partially inspired by our work with J. Mattingly on enhanced dissipation and mixing for (time-periodic) alternating shear flows \cite{ELM} and the previous work \cite{CEIM}.

The velocity fields we construct here are all alternating shear flows consisting of a succession of sawtooth shear flows of rapidly increasing frequency and rapidly decaying amplitude defined on a time interval $[0,T]$. There are thus three parameters that define our flows: the amplitudes $\alpha_j$, frequencies $N_j$, and the runtimes $t_j$ of the successive shear flows. The full Lagrangian flow-map associated to the velocity fields can thus be written as a composition of piece-wise linear maps (each of which depends on the triple $(\alpha_j, N_j, t_j))$:
\[\Phi(t)=\mathcal{U}_1\circ\mathcal{U}_2\circ...\circ\mathcal{U}_{N-1}\circ\mathcal{U}_{N}(t),\] for any $t\in [0,T).$
For piece-wise linear maps, it turns out that the assumption \eqref{fwdbckwd} (and its multiple map variant, given in Lemma \ref{lem:fwdbwd}) can be checked simply by computing the singular values of a $4\times 2$ constant matrix. From there, we argue relatively softly that the expected growth of the $H^1$ norm of solutions occurs for all smooth initial data \emph{at a universal rate}. This universality as well as the growth condition given in \eqref{H1Growth} imposes one condition on the triple $(\alpha_j, N_j, t_j)$, which essentially requires $N_j$ to grow sufficiently rapidly.  To show that \eqref{EnergySpectrum} holds, we observe as in \cite{DEIJ22} that this is implied by a ``balanced growth" condition. Namely, if we can prove that solutions to \eqref{eq:AE} satisfy a reverse interpolation estimate:
\[\|f\|_{H^\sigma}\leq C\frac{\|f\|_{H^1}^{\sigma}}{\|f\|_{L^2}^{\sigma-1}}\] for some fixed $\sigma>1$ and $C>0$, then \eqref{EnergySpectrum} holds automatically. Establishing this is relatively straightforward and this imposes another condition on $(\alpha_j, N_j,t_j)$. Finally, ensuring that the velocity field (and/or the scalar) satisfies the correct regularity bounds imposes a final condition on the parameters.  

\subsection{Sobolev space and Fourier analysis conventions} \label{sec:Fourier}

Some of the exact constants are important in the proofs (e.g., the fact that the constant prefactor in the first term on the right-hand side of \eqref{eq:upper2} is exactly one), and so before proceeding we define precisely the Sobolev norms that we are using. We identify $\T^2$ with $[-\pi,\pi)^2$. For $f \in L^2(\T^2)$, we define its Fourier series $\hat{f}:\Z^2 \to \C$ by 
$$\hat{f}(k,\ell) = \frac{1}{2\pi} \int_{\T^2} e^{-i(kx + \ell y)}f(x,y)\dx\dy.$$
Then, $f$ is recovered by the Fourier inversion formula 
$$ f(x,y) = \frac{1}{2\pi} \sum_{k,\ell \in \Z} e^{i(kx + \ell y)} \hat{f}(k,\ell) $$
and with our normalization conventions Plancherel's theorem reads 
$$ \|f\|_{L^2}^2 = \sum_{k, \ell \in \Z^2}|\hat{f}(k,\ell)|^2. $$
For $\sigma \ge 0$, the Sobolev space $H^\sigma$ is defined by 
$$H^\sigma = \left\{f \in L^2: \|f\|_{H^\sigma} < \infty\right\}, \quad \|f\|_{H^\sigma}: = \left(\sum_{(k,\ell)\in \Z^2} (1+|k|^2 + |\ell|^2)^\sigma|\hat{f}(k,\ell)|^2\right)^{1/2}$$
and for $f \in H^\sigma$, the homogeneous Sobolev seminorm is defined by 
$$\|f\|_{\dot{H}^\sigma}: = \left(\sum_{(k,\ell)\in \Z^2\setminus (0,0)} (|k|^2 + |\ell|^2)^\sigma|\hat{f}(k,\ell)|^2\right)^{1/2}.$$
For $s \ge 0$ we write $D^s = (-\Delta)^{s/2}$. That is, $D^s$ is the Fourier multiplier with symbol $(|k|^2 + |\ell|^2)^{s/2}$. We also define $D_x^s = (-\partial_{xx})^{s/2}$ and $D_y^s=(-\partial_{yy})^{s/2}$ to be the Fourier multipliers with symbols $|k|^s$ and $|\ell|^s$, respectively. In Appendix~\ref{appendix}, we recall some Sobolev interpolation and commutator inequalities that will be required in the proof.

\section{Proof of main theorems}
 
The proof of Theorem~\ref{thrm:main} is based on constructing a velocity field that satisfies the hypotheses of the abstract criterion for anomalous dissipation given in \cite[Proposition 1.3]{DEIJ22} for every smooth initial data. We begin in Section~\ref{sec:criteria} by recalling a version of this criterion, Proposition~\ref{prop:ADcriteria} below, that is suitable for our setting. Then, in Section~\ref{sec:udefinition} we define the velocity field used to prove Theorem~\ref{thrm:main}. The bulk of the paper consists of Sections
 ~\ref{sec:lowerbounds} and~\ref{sec:upperbounds}. Here, we prove the upper and lower bounds on the growth of Sobolev norms for the solution of \eqref{eq:AE} needed to apply Proposition~\ref{prop:ADcriteria}. Finally, in Section~\ref{sec:finish} we conclude the proof of Theorem~\ref{thrm:main} and then in Section~\ref{sec:thrm2} make the appropriate modifications to prove Theorem~\ref{thrm:2}. 

\subsection{Criteria for anomalous dissipation} \label{sec:criteria}
We begin with a criterion for anomalous dissipation which is a modified version of \cite[Corollary 1.5]{DEIJ22} with $H^2$ replaced by $H^\sigma$ for some $\sigma \in (1,2]$. The proof is exactly the same as in \cite{DEIJ22} after noting that the balanced growth condition of \cite[Lemma 1.4]{DEIJ22} holds just as well with the reverse interpolation 
$$ \|f\|_{L^2}\|f\|_{\dot{H}^2} \le C\|\dot{f}\|_{H^1}^2 $$
for some $C \ge 1$ replaced by 
$$ \|f\|_{L^2}^{\sigma-1}\|f\|_{\dot{H}^\sigma} \le C\|f\|_{\dot{H}^1}^\sigma $$
for any $\sigma > 1$. We use a criterion that allows for fractional Sobolev regularity because it is most convenient in the proof of norm growth to use a velocity field which is only $H^{3/2-}$. 

\begin{proposition} \label{prop:ADcriteria}
    Fix $T > 0$, $\sigma \in (1,2]$, and let $u \in L_{\text{loc}}^\infty([0,T);W^{1,\infty}(\T^2))$ be a divergence free velocity field. Let $f_0 \in H^\sigma$ be a mean-zero initial data and suppose that there exists $C > 1$ such that the solution to the transport equation \eqref{eq:AE}
satisfies the following two hypotheses:
\begin{enumerate}
    \item $\int_0^T \|\grad f(t)\|_{L^2}^2 \dt = \infty$,
    \item $\|f(t)\|_{L^2}^{\sigma - 1}\|f(t)\|_{\dot{H}^\sigma} \le C\|f(t)\|_{\dot{H}^1}^\sigma$ for every $t \in [0,T)$.
\end{enumerate}
Then, for every $\kappa \in (0,1)$ the solution of \eqref{eq:ADE} with the same initial data $f_0$ satisfies 
$$\kappa\int_0^T \|\grad f^\kappa(t)\|_{L^2}^2 \dt \ge \chi \|f_0\|_{L^2}^2, \quad \text{where} \quad \chi = \frac{1}{16}\left(\frac{1}{1+C^{\frac{1}{\sigma-1}}}\right)^{\frac{2\sigma}{\sigma-1}}. $$
\end{proposition}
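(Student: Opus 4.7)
I would follow the template of the proof of [DEIJ22, Cor.~1.5], which treats the $\sigma=2$ case; the only modification needed is that the reverse interpolation now involves $\dot{H}^\sigma$ for arbitrary $\sigma\in(1,2]$. The argument combines three ingredients: (i) a \emph{balanced growth} estimate extracting definite high-frequency $L^2$ mass of $f(t)$ from hypothesis~(2); (ii) a standard $L^2$ stability bound between $f^\kappa$ and $f$; and (iii) an integration step that converts (i) and (ii) into a lower bound on $\kappa\int_0^T\|\grad f^\kappa\|_{L^2}^2\dt$.

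For (i), I would show that if $\|g\|_{L^2}^{\sigma-1}\|g\|_{\dot{H}^\sigma}\leq C\|g\|_{\dot{H}^1}^\sigma$, then for any $A\in(0,1)$ and $N:=A\|g\|_{\dot{H}^1}/\|g\|_{L^2}$,
\[
\|P_{>N}g\|_{L^2}^2 \geq c_*\|g\|_{L^2}^2,\qquad c_*:=(1-A^2)^{\sigma/(\sigma-1)}\,C^{-2/(\sigma-1)}.
\]
This follows by splitting $\|g\|_{\dot{H}^1}^2=\|P_{\leq N}g\|_{\dot{H}^1}^2+\|P_{>N}g\|_{\dot{H}^1}^2$, bounding the low-frequency piece by $N^2\|g\|_{L^2}^2=A^2\|g\|_{\dot{H}^1}^2$ via Bernstein and the high-frequency piece by $\|P_{>N}g\|_{L^2}^{2(\sigma-1)/\sigma}\|g\|_{\dot{H}^\sigma}^{2/\sigma}$ via the standard log-convex interpolation for $\dot{H}^1$ between $L^2$ and $\dot{H}^\sigma$, and then inserting~(2) and rearranging. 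For (ii), the difference $h:=f^\kappa-f$ solves $(\partial_t+u\cdot\grad-\kappa\Delta)h=\kappa\Delta f$ with $h(0)=0$; testing with $h$ and using $\grad\cdot u=0$ together with Cauchy--Schwarz and absorption yields
\[
\|f^\kappa(t)-f(t)\|_{L^2}^2 \leq \kappa\int_0^t\|\grad f(s)\|_{L^2}^2\,\ds.
\]

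For (iii), I would fix a threshold $\lambda\in(0,c_*)$ and, appealing to hypothesis~(1), pick $T_\kappa\in(0,T)$ with $\kappa\int_0^{T_\kappa}\|\grad f\|_{L^2}^2\,\ds=\lambda\|f_0\|_{L^2}^2$; such $T_\kappa$ exists because (1) forces the left-hand side to diverge as $t\to T$ for every $\kappa\in(0,1)$. On $[0,T_\kappa]$ the stability bound gives $\|h(t)\|_{L^2}\leq\sqrt{\lambda}\,\|f_0\|_{L^2}$, and since transport preserves $L^2$ one has $\|f(t)\|_{L^2}=\|f_0\|_{L^2}$. Applying the balanced growth to $f(t)$ at the scale $N(t):=A\|f(t)\|_{\dot{H}^1}/\|f_0\|_{L^2}$, the triangle inequality upgrades the spectral statement for $f$ to $\|P_{>N(t)}f^\kappa(t)\|_{L^2}\geq(\sqrt{c_*}-\sqrt{\lambda})\|f_0\|_{L^2}$, whence $\|\grad f^\kappa(t)\|_{L^2}^2\geq N(t)^2\|P_{>N(t)}f^\kappa(t)\|_{L^2}^2=A^2(\sqrt{c_*}-\sqrt{\lambda})^2\|\grad f(t)\|_{L^2}^2$. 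Integrating over $[0,T_\kappa]$ and invoking the definition of $T_\kappa$,
\[
\kappa\int_0^T\|\grad f^\kappa\|_{L^2}^2\,\dt \geq A^2(\sqrt{c_*}-\sqrt{\lambda})^2\lambda\,\|f_0\|_{L^2}^2.
\]
The remaining work is an elementary optimization in $A\in(0,1)$ and $\lambda\in(0,c_*)$, tuned to produce the specific form of $\chi$ in the statement. I expect this bookkeeping of constants to be the main nuisance; the conceptual heart of the argument --- transferring an instantaneous spectral statement about the transport solution $f$ into a time-integrated dissipation statement for the diffusive solution $f^\kappa$ --- is already handled by the stability window built into the choice of $T_\kappa$ together with the triangle inequality on Fourier projections.
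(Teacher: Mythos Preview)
Your proposal is correct and follows essentially the same approach as the paper, which does not give a self-contained proof but simply observes that the argument of \cite[Corollary~1.5]{DEIJ22} goes through verbatim once the $\sigma=2$ reverse interpolation in \cite[Lemma~1.4]{DEIJ22} is replaced by hypothesis~(2); your three steps (balanced-growth spectral lower bound, $L^2$ stability of $f^\kappa-f$, and the choice of $T_\kappa$ with integration) are precisely the ingredients of that argument.
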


\subsection{Construction and regularity of the velocity field} \label{sec:udefinition}

For a particular $T_* > 0$, we now define the divergence-free velocity field $u:[0,T_*] \times \T^2 \to \R^2$ that we will use to prove Theorem~\ref{thrm:main}. The fact that this is sufficient to prove the result for general $T > 0$ follows from a simple scaling argument by defining $\tilde{u}(t) = (T_*/T) u(T_*t/T)$.

\subsubsection{Definition of $u:[0,T_*]\times \T^2 \to \R^2$}
Recall that we identify $\T^2$ with $[-\pi,\pi)^2$. Define $S:\T \to \R$ by
\begin{equation}
S(x) = |x|
.
\end{equation}
For parameters $\alpha, N \in \R$ let $H_{\alpha,N}$ and $V_{\alpha,N}$ denote the shear flows
\begin{equation}
H_{\alpha,N}(x,y) =
\begin{pmatrix}
 \alpha S(Ny) \\ 0
\end{pmatrix}, \quad V_{\alpha,N}(x,y) = \begin{pmatrix}
    0 \\ \alpha S(Nx)
\end{pmatrix}.
\end{equation}
Our chosen velocity field $u$ will alternate in time along a sequence of decreasing time steps between $H_{\alpha,N}$ and $V_{\alpha,N}$ for appropriately chosen parameters $\alpha$ and $N$ that vary at each step. Below we will specify a suitable sequence of frequencies $\{N_j\}_{j=1}^\infty$, amplitudes $\{\alpha_j\}_{j=1}^\infty$, and time steps $\{t_j\}_{j=1}^\infty$ with $\sum_{j=1}^\infty t_j < \infty$. For the time steps $\{t_j\}_{j=1}^\infty$ to be defined, let $T_0 = 0$ and $T_j = 2\sum_{n=1}^j t_j$ for $j \in \N$. Then, the terminal time $T_* > 0$ is defined by
\begin{equation}
    T_* := 2\sum_{j=1}^\infty t_j.
\end{equation}
Let $\psi \in C_c^\infty((0,1))$ be a smooth function satisfying $\psi \ge 0$ and $\int_0^1 \psi(t) \dt = 1$. Then, we define $u:[0,T_*]\times \T^2 \to \R^2$ for $t \in [T_{j-1}, T_j)$ by 
\begin{equation}
    u(t) = 
    \begin{cases}
   \psi\left(\frac{t-T_{j-1}}{t_j}\right) H_{\alpha_j, N_j} & t\in [T_{j-1}, T_{j-1}+t_j), \\
 \psi\left(\frac{t-T_{j-1}-t_j}{t_j}\right) V_{\alpha_j, N_j} & t \in [T_{j-1}+t_j, T_j).
    \end{cases}
\end{equation}
Note that since $\int_0^1 \psi(t)\dt = 1$, the flow map associated with $u$ at the discrete times $T_j$ and $T_j + t_j$ is the same as it would be if $\psi$ were removed from the definition. The time dependence involving $\psi$ is included so that $u$ can be regular in time. Ignoring $\psi$, the schematic for how the velocity field alternates in time is
$$ \underbrace{H_{\alpha_1, N_1}, V_{\alpha_1, N_1}}_{t \in [0,T_1)}, \underbrace{H_{\alpha_2, N_2}, V_{\alpha_2, N_2}}_{t\in [T_1,T_2)}, \underbrace{H_{\alpha_3, N_3}, V_{\alpha_3, N_3}}_{t\in [T_2, T_3)}, \ldots,  $$
where each $H_{\alpha_j,N_j}$ and $V_{\alpha_j,N_j}$ runs for time $t_j$. 

\subsubsection{Choice of parameters and regularity} \label{sec:parameters}
With the construction above it is clear that $u \in L^\infty_{\text{loc}}([0,T_*); W^{1,\infty}(\T^2))$, as is required to apply Proposition~\ref{prop:ADcriteria}. Additionally, one can easily check that a sufficient condition to have $u \in C^\infty([0,T];C^\alpha(\T^2)))$ as well as the regularity claimed in \eqref{eq:velocityregularity} is
\begin{equation} \label{eq:velconditions}
    \sup_{j\in \N}\frac{\alpha_j N_j^\alpha}{t_j^m} + \sup_{j \in \N}\frac{\alpha_j N_j}{1+|\log(N_j)|^4} < \infty,
\end{equation} for all $m\in\mathbb{N}$ and $\alpha<1.$
The bound on the first term gives the time regularity, while the bound on the second term implies that $u$ possesses the modulus of continuity $\omega(s) = s(1+(\log(s))^4)$ uniformly in time. For $M \ge 2$ to be chosen sufficiently large, we choose the parameters $N_j = 2^j$, 
$$\alpha_j = 2^{-j}(1+|\log(N_j)|^4) = 2^{-j}\left(1 + j^4 |\log(2)|^4\right),$$ 
and $t_j = 2\lceil M j^{5/2}\rceil /(\alpha_j N_j)$, where $\lceil x \rceil$ denotes the first integer greater than or equal to $x$. Then, $T_* = 2\sum_{j}t_j < \infty$ and it is easy to check that \eqref{eq:velconditions} holds. For convenience of notation, we define $K_j : = \alpha_j N_j t_j = 2\lceil M j^{5/2}\rceil$.

\subsection{Norm growth} \label{sec:lowerbounds}
From here until Section~\ref{sec:thrm2}, $N_j$, $\alpha_j$, and $t_j$ denote the parameter choices of Section~\ref{sec:parameters} for some $M \ge 2$ to be chosen sufficiently large. For $j \in \N$ define the Lebesgue measure preserving homeomorphisms
$$\phi_j(x,y) = 
\begin{pmatrix}
    x + \alpha_j t_j S(N_j y) \\ 
    y
\end{pmatrix},
\quad 
\psi_j(x,y) = 
\begin{pmatrix}
    x \\ 
    y + t_j \alpha_j S(N_j x)
\end{pmatrix},
\quad 
\Phi_j = \psi_j \circ \phi_j.
$$
For a solution of \eqref{eq:AE} we will write $f_j = f(T_j)$. Note that the definitions above are such that 
\begin{equation}
    f_j = f_0 \circ \Phi_1^{-1}\circ \Phi_2^{-1} \circ \ldots \circ \Phi_j^{-1}.
\end{equation}

A crucial step in verifying both hypotheses of Proposition~\ref{prop:ADcriteria} is obtaining essentially sharp lower bounds on the exponential growth of the $H^1$ norm of solutions to \eqref{eq:AE}. Since, defining $\bar{f}_j = f_{j-1} \circ \phi_j^{-1}$, we have 
\begin{align*}
    \partial_y\bar{f}_j &= (\partial_y f_{j-1}) \circ \phi_{j}^{-1} - K_j (\partial_x f_{j-1})\circ \phi_{j}^{-1}, \\ 
    \partial_x f_j &= (\partial_x \bar{f}_j)\circ \psi_j^{-1} - K_j (\partial_y \bar{f}_j)\circ \psi_j^{-1},
\end{align*}
one expects that over the time interval $[T_{j-1},T_j]$ the $H^1$ norm of a solution is amplified by the factor $K_j^2$ (if possible cancellations can be ignored). Lemma~\ref{lem:H1growth} below shows that this is indeed the case. There is a small complication with the preceding idea due to the fact that we need to consider all $t\in [0,T_*)$ and not just $T_j.$ To deal with the growth between the discrete times $T_j$, for $j \in \N$ we define the increasing function $\zeta_j:[0,t_j]\to [0,1]$ with $\zeta_j(0) = 0$ and $\zeta_j(t_j) = 1$ by 
\begin{equation} \label{eq:zetaj}
    \zeta_j(t) = \int_0^{t/t_j}\psi(\tau)\dee \tau,
\end{equation}
where $\psi$ is as defined in Section~\ref{sec:udefinition}. Then, let 
$$ \tilde{h}_j(t) = \begin{cases}
        K_j \zeta_j(t-T_{j-1}) & t \in [T_{j-1}, T_{j-1}+t_j], \\ 
        K_j^2 \zeta_j(t-T_{j-1}-t_j) & t\in [T_{j-1}+t_j, T_{j}]
    \end{cases} 
$$
and define $h_j:[T_{j-1},T_j]\to [0,\infty)$ by 
\begin{equation} \label{eq:hj}
h_j(t) = \begin{cases} \max(\tilde{h}_j(t),1) & t \in [T_{j-1}, T_{j-1}+t_j], \\ 
\max(\tilde{h}_j(t),K_j) & t\in [T_{j-1}+t_j, T_{j}].
\end{cases} 
\end{equation}
Our main lower bound on the $H^1$ growth of solutions of \eqref{eq:AE} is then given as follows.

\begin{lemma}[$H^1$ growth] \label{lem:H1growth}
Let $M$ be as defined in Section~\ref{sec:parameters} and chosen sufficiently large. For every mean-zero $f_0 \in H^1$ there exists a constant $c$ depending only on an upper bound for $\|f_0\|_{\dot{H}^1}/\|f_0\|_{L^2}$ such that for every $j \in \N$ and $t \in [T_{j-1},T_j]$ the solution of \eqref{eq:AE} satisfies
\begin{equation} \label{eq:H1growthlemma}
    \|f(t)\|_{\dot{H}^1} \ge c h_j(t) \|f_0\|_{\dot{H}^1}\prod_{n=1}^{j-1}K_n^2,
\end{equation}
where $h_j$ is as defined above in \eqref{eq:hj}. In particular, for any mean-zero and nontrivial initial data $f_0$ we have 
\begin{equation} \label{eq:blowup}
    \int_0^{T_*}\|\grad f(t)\|_{L^2}^2 \dt \ge c\|f_0\|_{\dot{H}^1}^2 \sum_{j=1}^\infty t_j \prod_{n=1}^{j-1}K_n^4 = \infty, 
\end{equation}
where the divergence of the sum follows easily from the definitions of $\alpha_j$, $N_j$, and $t_j$.
\end{lemma}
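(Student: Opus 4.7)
The plan is to obtain the essential growth estimate at the discrete times $T_j$ via the multi-map version of the forwards--backwards principle \eqref{fwdbckwd}, and then interpolate to arbitrary $t\in[T_{j-1},T_j]$ using the explicit time profile $\zeta_j$ built into $h_j$. First I would compute the Jacobian of $\Phi_j=\psi_j\circ\phi_j$. Since $S(z)=|z|$ satisfies $S'=\pm 1$ almost everywhere, the Jacobian $J_j(x,y)$ is piecewise constant on $\T^2$, taking values in a finite set of area-preserving matrices of the form
$$
J_j=\begin{pmatrix}1 & \pm K_j\\ \pm K_j & 1\pm K_j^2\end{pmatrix}.
$$

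\textbf{Pointwise matrix estimate and discrete-time growth.} For each such matrix I would verify the uniform pointwise inequality $|J_j^T v|^2+|J_j^{-T} v|^2 \ge c K_j^4 |v|^2$ for every $v\in\R^2$, with a universal $c>0$ independent of the sign pattern. This is the ``$4\times 2$ constant matrix'' computation alluded to in the introduction: stacking $J_j^T$ atop $J_j^{-T}$ yields a $4\times 2$ matrix whose smallest singular value squared is precisely this constant. Using area preservation of $\Phi_j$ and a change of variables, the pointwise inequality integrates to
$$
\|g\circ\Phi_j\|_{\dot H^1}^2+\|g\circ\Phi_j^{-1}\|_{\dot H^1}^2 \ge c K_j^4\, \|g\|_{\dot H^1}^2
$$
for every mean-zero $g\in H^1(\T^2)$, which is precisely the hypothesis of the multi-map forwards--backwards principle (Lemma \ref{lem:fwdbwd}). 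Applying that lemma to $f_0$ along the sequence $\{\Phi_j\}$, and using the Poincar\'e inequality $\|f_j\|_{\dot H^1}\ge\|f_j\|_{L^2}=\|f_0\|_{L^2}$ to bound the initial ``burn-in'' length by a quantity depending only on $\|f_0\|_{\dot H^1}/\|f_0\|_{L^2}$, one obtains
$$
\|f_j\|_{\dot H^1}\ge c(f_0)\,\|f_0\|_{\dot H^1}\prod_{n=1}^{j}K_n^2
$$
for every $j\ge 1$, with $c(f_0)$ depending only on an upper bound for $\|f_0\|_{\dot H^1}/\|f_0\|_{L^2}$, as required.

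\textbf{Interpolation to intermediate times and conclusion.} To extend the estimate to an arbitrary $t\in[T_{j-1},T_j]$, I would repeat the argument on the partial flow with effective shearing amplitude $\alpha_j\zeta_j(\cdot)$. On the first half $[T_{j-1},T_{j-1}+t_j]$ only the horizontal shear is partially active, so the relevant Jacobian has off-diagonal entries $\pm K_j\zeta_j(\cdot)$; the analogous pointwise bound yields amplification of order $K_j\zeta_j(\cdot)$, which overtakes the trivial baseline $1$ precisely at the transition encoded by the $\max(\cdot,1)$ in \eqref{eq:hj}. On the second half, the vertical shear contributes an additional partial factor $K_j\zeta_j(\cdot)$ on top of the full factor $K_j$ already gained during the first half, explaining the $\max(\cdot,K_j)$. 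The lower bound \eqref{eq:blowup} then follows by squaring \eqref{eq:H1growthlemma} and integrating in time; divergence of the resulting series is immediate from $K_j=2\lceil Mj^{5/2}\rceil$, which forces $\prod_{n<j}K_n^4$ to grow super-exponentially while $t_j$ decays only polynomially.

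\textbf{Main obstacle.} The principal technical point is making the pointwise estimate $|J_j^T v|^2+|J_j^{-T}v|^2\ge cK_j^4|v|^2$ genuinely uniform across the four sign patterns of $J_j$: the off-diagonal cross-terms (of order $K_j^3$) could \emph{a priori} conspire with an adversarial direction $v$ to eat into the leading $K_j^4$ contribution. A direct case analysis of the four sign combinations, combined with the constraint $\det J_j=1$, shows that the worst case still leaves a residual of order $K_j^4$, securing the universal constant. Everything else---the multi-map iteration, the Poincar\'e control of the burn-in, and the continuous-time interpolation---proceeds by direct adaptation of the single-map argument sketched right after Lemma \ref{fwdbckwdp}.
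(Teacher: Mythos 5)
Your overall strategy matches the paper's (pointwise lower bounds on a stacked $4\times 2$ matrix, a forwards--backwards iteration, then an extension to intermediate times), but there are two genuine gaps in the execution. First, the estimate you actually state and verify,
\begin{equation}
\|g\circ\Phi_j\|_{\dot H^1}^2+\|g\circ\Phi_j^{-1}\|_{\dot H^1}^2 \ge c K_j^4 \|g\|_{\dot H^1}^2,
\end{equation}
does not chain the sequence $f_j$. Since $f_{j-1}=f_j\circ\Phi_j$ but $f_{j+1}=f_j\circ\Phi_{j+1}^{-1}$, the inequality you need to relate three consecutive terms is the \emph{mixed} one, $\|g\circ\Phi_{j+1}^{-1}\|_{\dot H^1}^2+\|g\circ\Phi_j\|_{\dot H^1}^2\ge cK_j^4\|g\|_{\dot H^1}^2$, whose verification requires stacking $A_j$ on $B_{j+1}$ and using that $K_{j+1}/K_j$ is uniformly bounded; the same-map estimate is not ``precisely the hypothesis'' of the multi-map principle. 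Second, and more seriously, even granting the mixed estimate, iterating an inequality with a small constant $c<1$ per step only yields $\|f_j\|_{\dot H^1}\gtrsim c^{j/2}\|f_0\|_{\dot H^1}\prod_{n\le j}K_n^2$, with an exponentially degrading prefactor. This does not prove \eqref{eq:H1growthlemma} as stated (the constant there is independent of $j$), and the sharp, loss-free rate is essential downstream: the balanced-growth hypothesis of Proposition~\ref{prop:ADcriteria} compares the $\dot H^{1+s}$ upper bound $\prod K_n^{2+2s}$ against the $(1+s)$-th power of the $\dot H^1$ lower bound, and a $c^{j}$ loss destroys that comparison. The paper's resolution is a two-tier argument: the lossy mixed estimate is used only to propagate monotonicity of $j\mapsto\|f_j\|_{\dot H^1}$, which in turn gives $\|f_j\|_{\dot H^1}^2\le (2/cK_j^4)\|f_{j+1}\|_{\dot H^1}^2$; then the \emph{same-map} estimate applied to $g=f_{j+1}$ produces the sharp factor $K_{j+2}^4-CK_{j+2}^3$ once the spurious term $\|f_{j+1}\circ\Phi_{j+2}\|_{\dot H^1}^2$ is absorbed using a crude Lipschitz bound together with the monotonicity inequality. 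The resulting correction $\prod_j(1-CK_j^{-1})$ converges because $\sum K_j^{-1}<\infty$. Your proposal contains no mechanism for avoiding the constant loss.

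A smaller but still real issue is the intermediate-time step. The partial shear has Jacobian conjugate to $\begin{pmatrix}1&0\\ \pm K_j\zeta&1\end{pmatrix}$, for which no pointwise two-sided estimate gives amplification by $K_j\zeta$ in \emph{all} directions (the map fixes a direction), so ``repeating the argument on the partial flow'' does not work as stated. What the paper does instead is combine the discrete-time \emph{lower} bound at step $j$ with the elementary \emph{upper} bounds $\|f_{j-1}\|_{\dot H^1}\le C\|f_0\|_{\dot H^1}\prod_{n<j}K_n^2$ to deduce that the single component $\|\partial_x f_{j-1}\|_{L^2}$ (respectively $\|\partial_y(f_{j-1}\circ\phi_j^{-1})\|_{L^2}$) is itself bounded below by $\tfrac{c}{4}\|f_0\|_{\dot H^1}\prod_{n<j}K_n^2$; that component is exactly the one amplified by the factor $K_j\zeta_j(t-T_{j-1})$ at intermediate times. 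You should make this extraction of a lower bound on an individual partial derivative explicit, since the full-gradient lower bound alone is not enough.
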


\subsubsection{Proof of discrete-time $H^1$ growth}

We will first obtain a lower bound on the $\|f_j\|_{\dot{H}^1}$ and then upgrade to continuous time. The key lemma needed to prove the growth at discrete times, which one should view as a generalization of \eqref{fwdbckwd}, is the following.
\begin{lemma} \label{lem:fwdbwd}
There exist constants $c,C > 0$ so that if $M$ is sufficiently large then for every mean-zero $g \in H^1$ and $j \in \N$ we have the estimates
\begin{align}
\|g \circ \Phi_{j+1}^{-1}\|_{\dot{H}^1}^2 + \|g \circ \Phi_{j}\|_{\dot{H}^1}^2 &\ge cK_j^4\|g\|_{\dot{H}^1}^2 \label{eq:FBdiff} \\ 
\|g \circ \Phi_{j+2}^{-1}\|_{\dot{H}^1}^2 + \|g \circ \Phi_{j+2}\|_{\dot{H}^1}^2 &\ge (K_{j+2}^4 - CK_{j+2}^3)\|g\|_{\dot{H}^1}^2. \label{eq:FBsame}
\end{align}
\end{lemma}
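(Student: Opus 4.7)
My strategy is to reduce both inequalities to pointwise $2\times 2$ matrix bounds along $\nabla g$. Using that each $\Phi_j$ is Lebesgue-measure-preserving, the chain rule combined with a change of variables gives
\begin{equation*}
\|g\circ\Phi_j\|_{\dot{H}^1}^2 = \int_{\T^2}\bigl\langle D\Phi_j(\Phi_j^{-1}(u))D\Phi_j(\Phi_j^{-1}(u))^T\nabla g(u),\,\nabla g(u)\bigr\rangle\,\dee u,
\end{equation*}
and similarly
\begin{equation*}
\|g\circ\Phi_j^{-1}\|_{\dot{H}^1}^2 = \int_{\T^2}\bigl\langle D\Phi_j(u)^{-T}D\Phi_j(u)^{-1}\nabla g(u),\,\nabla g(u)\bigr\rangle\,\dee u.
\end{equation*}
In both integrals $\nabla g$ is evaluated at the same point $u$, but the Jacobian is evaluated at $\Phi_j^{-1}(u)$ in one and at $u$ in the other. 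The sum is $\int\langle Q(u)\nabla g,\nabla g\rangle\,\dee u$ for an explicit symmetric matrix-valued $Q$, and the goal becomes a pointwise lower bound of the form $Q(u)\ge (K_j^4 - CK_j^3)I$ (with the analogous $K_{j+1}^4 e_1 e_1^T + K_j^4 e_2 e_2^T$ version for \eqref{eq:FBdiff}).

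\textbf{Computing $D\Phi_j$ and its leading behavior.} Because $S'\in\{-1,+1\}$, the Jacobian $D\Phi_j = D\psi_j(\phi_j(\cdot))D\phi_j$ is almost-everywhere piecewise constant with four possible values
\begin{equation*}
A_{s_1,s_2} = \begin{pmatrix}1 & K_j s_1\\ K_j s_2 & 1+K_j^2 s_1 s_2\end{pmatrix},\qquad (s_1,s_2)\in\{-1,+1\}^2,\quad \det A_{s_1,s_2} = 1,
\end{equation*}
where the signs reflect which half-periods of $S$ the relevant arguments fall in. A direct expansion in powers of $K = K_j$ gives
\begin{equation*}
A_{s_1,s_2}A_{s_1,s_2}^T = K^4 e_2 e_2^T + K^3 s_2\,\mathcal{E} + R_+,\qquad A_{s_1,s_2}^{-T}A_{s_1,s_2}^{-1} = K^4 e_1 e_1^T - K^3 s_2\,\mathcal{E} + R_-,
\end{equation*}
with $\mathcal{E} = e_1 e_2^T + e_2 e_1^T$ and remainders $R_\pm$ of operator norm at most $CK^2$ uniformly in the signs. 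The decisive feature is that the $K^4$ leading pieces are sign-\emph{independent} rank-one projectors onto complementary coordinate directions, so any mismatch between the sign patterns at $u$ and $\Phi_j^{-1}(u)$ only enters the subleading $K^3$ term.

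\textbf{Combining and concluding.} For \eqref{eq:FBsame}, both matrices share amplitude $K = K_{j+2}$: write $(r_1,r_2)$ for the signs at $\Phi_{j+2}^{-1}(u)$ and $(s_1,s_2)$ for the signs at $u$. Then $Q(u) = K^4 I + K^3(r_2-s_2)\mathcal{E} + O(K^2)$, and since $|r_2-s_2|\le 2$ and $2|v_1 v_2|\le |v|^2$, the pointwise quadratic form satisfies $\langle Q(u)v,v\rangle\ge (K^4 - CK^3)|v|^2$ for $M$ large, which integrates to \eqref{eq:FBsame}. For \eqref{eq:FBdiff} the amplitudes differ: the backward piece carries $K_{j+1}$ and the forward piece carries $K_j$, so the leading part of $Q(u)$ becomes $K_{j+1}^4 e_1 e_1^T + K_j^4 e_2 e_2^T$ with an $O(K_{j+1}^3)$ off-diagonal cross term. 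I would absorb the cross term by Young's inequality, sacrificing a constant fraction of the $K_{j+1}^4 v_1^2$ piece, leaving a pointwise bound $\ge \tfrac{1}{2} K_j^4 |v|^2$ valid as soon as $K_{j+1}^2\le cK_j^4$ for a fixed small $c$. Since $K_j = 2\lceil Mj^{5/2}\rceil$ the ratio $K_{j+1}/K_j$ is bounded uniformly in $j$, so a single choice of large $M$ enforces this condition for every $j$ simultaneously, and integration finishes \eqref{eq:FBdiff}.

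\textbf{Main obstacle.} The scheme works precisely because the leading $K^4$ pieces of $AA^T$ and $A^{-T}A^{-1}$ live in orthogonal rank-one subspaces and do not see the sign pattern, so the entire effect of the sign discrepancy between $u$ and $\Phi_j^{-1}(u)$ is confined to the $K^3\mathcal{E}$ term. The delicate points I anticipate are (i) keeping the $O(K^2)$ remainders genuinely uniform in the four sign patterns to produce a clean $K^4-CK^3$ loss rather than something worse, and (ii) for \eqref{eq:FBdiff}, verifying that $K_{j+1}^2\ll K_j^4$ holds for \emph{all} $j$ under a single large-$M$ condition, which reduces to the boundedness of $K_{j+1}/K_j$ and is immediate from the explicit definition in Section~\ref{sec:parameters}.
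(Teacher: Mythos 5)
Your proposal is correct and follows essentially the same route as the paper: the paper stacks the two Jacobian factors into a $4\times 2$ matrix $Q_{k,\ell}$ and bounds $Q_{k,\ell}^T Q_{k,\ell}v\cdot v$ pointwise, which is exactly your sum of the two $2\times 2$ Gram matrices, with the same identification of the sign-independent $K^4$ rank-one leading parts on orthogonal directions, the same $O(K^3)$ absorption for \eqref{eq:FBsame}, and the same use of the bounded ratio $K_{j+1}/K_j$ plus $K_j\ge M$ for \eqref{eq:FBdiff}. (The only nit is the transposition order in your second display, which should be $D\Phi_j(u)^{-1}D\Phi_j(u)^{-T}$; this does not affect the leading-order structure or the argument.)
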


\begin{proof}
    Both of the estimates require us to bound from below 
$$ \|\grad(g \circ \Phi_{k})\|_{L^2}^2 + \|\grad(g\circ \Phi_{\ell}^{-1})\|_{L^2}^2 $$
for some choices for $k,\ell \in \N$. By the chain rule and the fact that $\Phi_{k}$ is area preserving we have 
\begin{equation} \|\grad (g \circ \Phi_{k})\|_{L^2}^2 = \int_{\T^2} |(\grad \Phi_{k})^T (\grad g) \circ \Phi_{k}|^2 = \int_{\T^2} |A_{k} \grad g|^2,
\end{equation}
where $A_{k}: \T^2 \to \R^{2\times 2}$ is the matrix valued function given by 
\begin{equation}  A_{k} = (\grad \Phi_{k})^T \circ \Phi^{-1}_{k}.
\end{equation}
A direct computation shows that
\begin{equation}
A_{k}(x,y) = \begin{pmatrix}
        1 & K_k S'(N_k x) \\
        K_k S'(N_k(y-\alpha_k t_k S(N_k x))) & 1 + K^2_k S'(N_k x)S'(N_k(y-\alpha_k t_k S(N_k x)))
    \end{pmatrix}.
\end{equation}
A similar calculation yields 
\begin{equation}
    \|\grad(g \circ \Phi_\ell^{-1})\|_{L^2}^2 = \int_{\T^2}|B_{\ell} \grad g|^2,
\end{equation}
where 
\begin{equation}
B_{\ell}(x,y) = \begin{pmatrix}
    1 + K_\ell^2 S'(N_\ell y) S'(N_\ell(x+\alpha_\ell t_\ell S(N_\ell y))) & -K_\ell S'(N_\ell(x+\alpha_\ell t_\ell S(N_\ell y))) \\ 
    -K_\ell S'(N_\ell y) & 1
\end{pmatrix}.
\end{equation}
Define $Q_{k,\ell}:\T^2 \to \R^{4\times 2}$ by 
\begin{equation}
    Q_{k,\ell}(x,y) = \begin{pmatrix}
        A_{k}(x,y) \\ 
        B_{\ell}(x,y)
    \end{pmatrix}.
\end{equation}
Then, 
\begin{equation} \label{eq:H1sum}
    \|g\circ \Phi_{k}\|_{\dot{H}^1}^2 + \|g \circ \Phi_{\ell}^{-1}\|_{\dot{H}^1}^2 = \int_{\T^2}|Q_{k,\ell}\grad g|^2 = \int_{\T^2}(Q_{k,\ell}^T Q_{k,\ell} \grad g, \grad g)
\end{equation}
and to prove \eqref{eq:FBdiff} and \eqref{eq:FBsame} it suffices to suitably bound from below 
$$ Q_{k,\ell}^T(x,y)^T Q_{k,\ell}(x,y) v \cdot v $$
for general $v \in \R^2$ in the cases $(k,\ell) = (j,j+1)$ and $(k,\ell) = (j,j)$ uniformly on the full measure set where the derivatives in $A_{k}$ and $B_{\ell}$ are all defined. For any $(x,y) \in \T^2$ where all of the derivatives are defined, let 
\begin{align}
    a_1 &= S'(N_k x), \\
    a_2 &= S'(N_k(y-\alpha_k t_k S(N_kx))), \\ 
    a_3 &= S'(N_\ell y), \\
    a_4 & = S'(N_\ell(x+\alpha_\ell t_\ell S(N_\ell y))). \\
\end{align}
Then, $a_j \in \{1,-1\}$ and we have 
\begin{equation}
    Q_{k,\ell} = \begin{pmatrix}
        1 & K_k a_1 \\
        K_k a_2 & 1 + K^2_k a_1 a_2 \\
        1 + K_\ell^2 a_3 a_4 & -K_\ell a_4 \\
        -K_\ell a_3 & 1
    \end{pmatrix}.
\end{equation}
We first let $(k,\ell) = (j,j)$ for some $j \in \N$ and prove \eqref{eq:FBsame}. In this case it is straightforward to check that there exists a matrix $P_1 \in \R^{2\times 2}$ with each entry bounded by a constant $C_1$ that does not depend on $j$ such that 
$$ Q_{j,j}^T Q_{j,j} = K_j^4\left(\mathbf{1} + \frac{P_1}{K_j}\right).$$
This implies that 
$$ Q_{j,j}^T Q_{j,j} v \cdot v \ge (K_j^4 - 2C_1K_j^3)|v|^2 $$
for every $v \in \R^2$, which together with \eqref{eq:H1sum} implies \eqref{eq:FBsame}. For \eqref{eq:FBdiff}, we consider the case $(k,\ell) = (j,j+1)$ and compute that 
$$ Q_{j,j+1}^T Q_{j,j+1} = \begin{pmatrix} 
K_{j+1}^4 & 0 \\ 
0 & K_j^4
\end{pmatrix} 
+ (K_j^3 + K_{j+1}^3) P_2
$$
for a matrix $P_2 \in \R^{2\times 2}$ with again entries uniformly bounded by a constant $C_2$ that does not depend on $j$. Since $1 \le K_{j+1}/K_j \le 17$ for any $j$, it follows then that for any $v \in \R^2$ we have 
\begin{equation} \label{eq:jjplus1}
Q_{j,j+1} Q_{j,j+1}^T v \cdot v \ge (K_j^4 - 2(1+17^3)C_2K_j^3)|v|^2 \ge \frac{1}{2}K_j^4 |v|^2, 
\end{equation}
where the second inequality follows from $K_j \ge M$ by choosing $M$ sufficiently large. Combining \eqref{eq:jjplus1} with \eqref{eq:H1sum} proves \eqref{eq:FBdiff}.
\end{proof}

We can now prove the sharp $H^1$ growth at the discrete times $T_j$ by using an argument based on the idea of Lemma~\ref{fwdbckwdp} discussed in the introduction.

\begin{lemma}\label{lem:discretegrowth}
Let $M$ be as defined in Section~\ref{sec:parameters} and chosen sufficiently large. For every mean-zero $f_0 \in H^1$ there exists a constant $c$ depending only on an upper bound for $\|f_0\|_{\dot{H}^1}/\|f_0\|_{L^2}$ such that for every $j \in \N$ the solution of \eqref{eq:AE} satisfies
\begin{equation} 
    \|f_j\|_{\dot{H}^1} \ge c\|f_0\|_{\dot{H}^1}\prod_{n=1}^{j}K_n^2.
\end{equation}
\end{lemma}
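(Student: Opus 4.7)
I would adapt the discrete convexity (``forwards-backwards'') argument of Lemma~\ref{fwdbckwdp} to the non-stationary sequence $\{\Phi_j\}_{j \geq 1}$, using Lemma~\ref{lem:fwdbwd} in place of the single-map convexity hypothesis. Set $a_j := \|f_j\|_{\dot{H}^1}^2$. The starting point is to apply \eqref{eq:FBdiff} with $g = f_j$ and with the lemma's free index taken equal to $j$; the identities $f_j \circ \Phi_{j+1}^{-1} = f_{j+1}$ and $f_j \circ \Phi_j = f_{j-1}$ (the latter following from $f_j = f_{j-1} \circ \Phi_j^{-1}$) then yield the three-term recursion
\[ a_{j+1} + a_{j-1} \;\geq\; c K_j^4 \, a_j, \qquad j \geq 1, \]
with the universal constant $c > 0$ coming from Lemma~\ref{lem:fwdbwd}.

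The next step is the core convexity argument. If at some $n_0 \geq 1$ we have $a_{n_0} \geq a_{n_0-1}$, then the recursion gives $a_{n_0+1} \geq c K_{n_0}^4 a_{n_0} - a_{n_0-1} \geq \tfrac{c}{2} K_{n_0}^4 a_{n_0} \geq a_{n_0}$, where the second inequality holds once $M$ is chosen so that $c K_n^4 \geq 2$ for every $n \geq 1$. Iterating, $a_{n+1} \geq \tfrac{c}{2} K_n^4 a_n$ for every $n \geq n_0$, which gives the desired exponential growth at rate $\prod K_n^4$ beyond the index $n_0$.

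To bound the startup index $n_0$ in terms of $R := \|f_0\|_{\dot{H}^1}/\|f_0\|_{L^2}$, I would combine the recursion with the Poincar\'e floor $a_j \geq \|f_j\|_{L^2}^2 = \|f_0\|_{L^2}^2$ (valid since each $\Phi_j$ is measure preserving and $f_j$ is mean-zero). A strictly decreasing stretch of the sequence forces $a_{j+1}/a_{j-1} \leq 2/(cK_j^4)$ at every step (by $a_{j+1} + a_{j-1} \geq c K_j^4 a_j > c K_j^4 a_{j+1}$), and iterating separately over even and odd indices shows $a_J$ dips below the Poincar\'e floor once $\prod_{n \leq J} K_n^4 \gtrsim R^2 (2/c)^J$, which gives $n_0 \leq n_0(R)$.

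Finally, I would splice the two regimes. For $j \leq n_0(R)$ the Poincar\'e bound $a_j \geq a_0/R^2$, combined with the $R$-dependent bound on $\prod_{n=1}^{n_0(R)} K_n^4$, absorbs the missing powers of $K_n$ into a constant $c(R)$; for $j > n_0(R)$ the exponential growth of the second step takes over, starting from the base value $a_{n_0} \geq a_0/R^2$. The hardest part will be this last bookkeeping: reconciling the ``shifted'' product $\prod_{n=n_0}^{j-1}K_n^4$ produced by the induction with the full product $\prod_{n=1}^j K_n^4$ demanded by the statement, while keeping the constant uniform in $j$ and depending only on $R$. The trivial Lipschitz upper bound $a_j \leq C \prod_{n=1}^j K_n^4 \, a_0$ (from $\|\grad \Phi_j^{-1}\|_\infty \lesssim K_j^2$) is useful here to pin down the early-phase contribution.
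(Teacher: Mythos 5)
Your setup is right and matches the paper's: the recursion $a_{j+1}+a_{j-1}\ge cK_j^4 a_j$ from \eqref{eq:FBdiff}, the monotonicity propagation once $a_{n_0}\ge a_{n_0-1}$, and the Poincar\'e-floor argument bounding the startup index $n_0$ in terms of $R$ are all exactly the paper's Claim 1 and the first half of its Claim 2. But there is a genuine gap in the growth phase. From \eqref{eq:FBdiff} alone you get $a_{n+1}\ge (cK_n^4-1)a_n\ge \tfrac{c}{2}K_n^4 a_n$ with $c$ a fixed universal constant strictly less than $2$ (the proof of Lemma~\ref{lem:fwdbwd} gives $c=1/2$). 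Iterating this yields $a_n\ge \left(\tfrac{c}{2}\right)^{n-n_0}\prod_{m=n_0}^{n-1}K_m^4\, a_{n_0}$, and the prefactor $\left(\tfrac{c}{2}\right)^{n-n_0}$ decays geometrically in $n$; it cannot be absorbed into a constant depending only on $R$. So your argument proves only $\|f_j\|_{\dot H^1}\ge c\,2^{-j}\|f_0\|_{\dot H^1}\prod_{n=1}^j K_n^2$ (say), which is strictly weaker than the lemma. This loss is not cosmetic: the balanced-growth hypothesis of Proposition~\ref{prop:ADcriteria} requires $\|f_j\|_{\dot H^1}^{1+s}\gtrsim \prod K_n^{2+2s}$ to cancel against the upper bound of Lemma~\ref{lem:uppermain}, and a geometric deficit in the lower bound destroys that cancellation for large $j$.

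The missing ingredient is the second estimate of Lemma~\ref{lem:fwdbwd}, namely \eqref{eq:FBsame}, which you never invoke. Applied with $g=f_{j+1}$ and the \emph{same} map $\Phi_{j+2}$ forwards and backwards, it has the sharp leading constant $K_{j+2}^4-CK_{j+2}^3$ (coefficient $1$ on $K^4$, not a small $c$). One then has to show that the spurious backward term $\|f_{j+1}\circ\Phi_{j+2}\|_{\dot H^1}^2$ is lower order: writing $f_{j+1}\circ\Phi_{j+2}=f_j\circ(\Phi_{j+1}^{-1}\circ\Phi_{j+2})$, the crude Lipschitz bound $\|\grad(\Phi_{j+1}^{-1}\circ\Phi_{j+2})\|_{L^\infty}\le CK_{j+2}^3$ together with the near-monotonicity $\|f_j\|_{\dot H^1}^2\le \tfrac{2}{cK_j^4}\|f_{j+1}\|_{\dot H^1}^2$ (which you already derived) gives $\|f_{j+1}\circ\Phi_{j+2}\|_{\dot H^1}^2\le CK_{j+2}^2\|f_{j+1}\|_{\dot H^1}^2$. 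This yields $a_{j+2}\ge (K_{j+2}^4-C_1K_{j+2}^3)a_{j+1}$, and since $\sum_j K_j^{-1}<\infty$ the product $\prod_j(1-C_1K_j^{-1})$ converges to a positive number, giving the full $\prod_n K_n^4$ with a $j$-independent constant. Your splicing of the early phase via the Poincar\'e floor and the trivial Lipschitz upper bound is then fine as written.
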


\begin{proof}
By linearity and the fact that \eqref{eq:AE} conserves the $L^2$ norm we may assume without loss of generality that $\|f_0\|_{L^2} = 1$. Throughout this proof, $c$ and $C$ denote the constants from Lemma~\ref{lem:fwdbwd}.

\vspace{0.2cm}
\noindent \textbf{Claim 1}: There exists $J \in \N$ depending only on $\|f_0\|_{\dot{H}^1}$ such that for some $j_0 \in \N \cup \{0\}$ with $j_0 \le J$ there holds
\begin{equation} \label{eq:growth1}
   \|f_{j_0+1}\|_{\dot{H}^1} \ge \|f_{j_0}\|_{\dot{H}^1}.
\end{equation}
In fact, one can take $J = \lceil \log(\|f_0\|_{\dot{H}^1})\rceil$.
\begin{proof}[Proof of Claim 1]
Let $j \in \N$ be such that $\|f_{j+1}\|_{\dot{H}^1} \le \|f_j\|$. By \eqref{eq:FBdiff} applied with $g = f_j$ we have 
\begin{equation} \label{eq:growth2}
\|f_{j+1}\|_{\dot{H}^1}^2 + \|f_{j-1}\|_{\dot{H}^1}^2 \ge c K_j^4 \|f_j\|_{\dot{H}^1}^2.
\end{equation}
Since $\|f_{j+1}\|_{\dot{H}^1} \le \|f_j\|$, assuming $M$ is large enough so that $cK_j^4/2 \ge 1$ for all $j$, it follows that
\begin{equation} \|f_j\|_{\dot{H}^1}^2 \le \frac{2}{c K_j^4}\|f_{j-1}\|_{\dot{H}^1}^2. 
\end{equation}
Therefore, if \eqref{eq:growth1} fails for all $0 \le j_0 \le J$ and $M$ is large enough so that $2/(cK_j^4) \le e^{-2}$ for every $j$, we have 
\begin{equation}
    \|f_J\|_{\dot{H}^1}^2 \le \|f_0\|_{\dot{H}^1}^2 \prod_{j=1}^J \frac{2}{cK_j^4} \le e^{-2J}\|f_0\|_{\dot{H}^1}^2.
\end{equation}
It follows then by the Poincar\'{e} inequality that 
\begin{equation} 
1 = \|f_0\|_{L^2}^2 = \|f_J\|_{L^2}^2 \le \|f_J\|_{\dot{H}^1}^2 \le e^{-2J}\|f_0\|_{\dot{H}^1}^2,
\end{equation}
and so 
\begin{equation}\label{eq:growth4}
    J \le \log(\|f_0\|_{\dot{H}^1}).
\end{equation}
\end{proof}

\noindent \textbf{Claim 2}: There exists a constant $C_1 > 0$ so that if $\|f_{j}\|_{\dot{H}^1} \ge \|f_{j-1}\|_{\dot{H}^1}$ for some $j \in \N$, then 
\begin{align} 
    \|f_{j+1}\|_{\dot{H}^1} &\ge \|f_{j}\|_{\dot{H}^1}, \label{eq:growth4}\\
    \|f_{j+2}\|_{\dot{H}^1}^2 &\ge (K_{j+2}^4 - C_1K_{j+2}^3)\|f_{j+1}\|_{\dot{H}^1}^2. \label{eq:growth5}
\end{align}

\begin{proof}[Proof of Claim 2]
By \eqref{eq:FBdiff} applied with $g = f_j$ we have 
\begin{equation}
    \|f_{j-1}\|_{\dot{H}^1}^2 + \|f_{j+1}\|_{\dot{H}^2}^2 \ge c K_j^4 \|f_j\|_{\dot{H}^1}.
\end{equation}
Since $\|f_j\|_{\dot{H}^1} \ge \|f_{j-1}\|_{\dot{H}^1}$ it follows that 
\begin{equation} \label{eq:growth6}
   \|f_j\|_{\dot{H}^1}^2 \le \frac{2}{cK_j^4} \|f_{j+1}\|_{\dot{H}^1}^2,
\end{equation}
which implies \eqref{eq:growth4} and will also be crucial to obtaining \eqref{eq:growth5}. To prove \eqref{eq:growth5} we begin by applying \eqref{eq:FBsame} with $g = f_{j+1}$ to get
\begin{equation} \label{eq:growth10}
    \|f_{j+2}\|_{\dot{H}^1}^2 + \|f_{j+1}\circ \Phi_{j+2}\|_{\dot{H}^1}^2 \ge (K_{j+2}^4 - CK_{j+2}^3)\|f_{j+1}\|_{\dot{H}^1}^2.
\end{equation}
Observe now that 
\begin{equation}
    f_{j+1} \circ \Phi_{j+2} = f_j \circ (\Phi_{j+1}^{-1} \circ \Phi_{j+2}) = f_j \circ (\phi_{j+1}^{-1} \circ \psi_{j+1}^{-1} \circ \psi_{j+2} \circ \phi_{j+2})
\end{equation}
and so by the fact that 
$$\grad(\psi_{j+1}^{-1} \circ \psi_{j+2})(x,y) = \begin{pmatrix}
    1 & 0 \\ 
    K_{j+2}S'(N_{j+2}x) - K_{j+1}S'(N_{j+1}x) & 1
\end{pmatrix} $$ 
we have 
\begin{equation}
\|\grad (\phi_{j+1}^{-1} \circ \psi_{j+1}^{-1} \circ \psi_{j+2} \circ \phi_{j+2})\|_{L^\infty} \le \|\grad \phi_{j+1}^{-1}\|_{L^\infty} \|\grad(\psi_{j+1}^{-1} \circ \psi_{j+2})\|_{L^\infty} \|\grad \phi_{j+2}\|_{L^\infty} \le C_2K_{j+2}^3
\end{equation}
for some constant $C_2$ that does not depend on $j$. Employing \eqref{eq:growth6} we deduce that there is $C_3$ independent of $j$ such that 
\begin{equation} \label{eq:growth9}
\|f_{j+1} \circ \Phi_{j+2}\|_{\dot{H}^1}^2 \le C_2^2 K_{j+2}^6\|f_{j}\|_{\dot{H}^1}^2 \le C_3K_{j+2}^2 \|f_{j+1}\|_{\dot{H}^1}^2. 
\end{equation}
Putting \eqref{eq:growth9} into \eqref{eq:growth10} completes the proof of \eqref{eq:growth5}.
\end{proof}

We are now ready to complete the proof of the lemma. By Claim 1, there exists $j_0 \in \N$ with $j_0 \le \lceil\log(\|f_0\|_{\dot{H}^1})\rceil + 1$ such that 
\begin{equation}
    \|f_{j_0}\|_{\dot{H}^1} \ge \|f_{j_0-1}\|_{\dot{H}^1}.
\end{equation}
Iterating \eqref{eq:growth4} of Claim 2 it follows that $\|f_{j+1}\|_{\dot{H}^1} \ge \|f_j\|_{\dot{H}^1}$ for every $j \ge j_0$. Therefore, by Claim 2, \eqref{eq:growth5} holds for every $j \ge j_0$. Given $n \ge j_0+2$ we iterate \eqref{eq:growth5} over $j_0 \le j \le n$ and use also $\|f_{j_0+1}\|_{\dot{H}^1} \ge \|f_{j_0}\|_{\dot{H}^1}$ to conclude 
\begin{equation}
    \|f_n\|_{\dot{H}^1}^2 \ge \|f_{j_0}\|_{\dot{H}^1}^2 \prod_{j={j_0+2}}^n (K_j^4 - C_1 K_j^3)\ge c_1 \|f_0\|_{\dot{H}^1}^2 \prod_{j=1}^n (K_j^4 - C_1 K_j^3),
\end{equation}
where 
$$c_1 = \frac{1}{\|f_0\|_{\dot{H}^1}^2} \left(\prod_{j=1}^{\lceil \log(\|f_0\|_{\dot{H}^1})\rceil + 2}K_j^4\right)^{-1}.  $$
The result then follows from 
$$ \prod_{j=1}^n(K_j^4 - C_1 K_j^3) = \left(\prod_{j=1}^n K_j^4\right)\left(\prod_{j=1}^n(1-C_1K_j^{-1})\right)$$
and the fact that 
$$ \prod_{j=1}^\infty(1-C_1K_j^{-1}) > 0$$
because $\sum_{j=1}^\infty K_j^{-1} < \infty$ due to $K_j \ge j^{5/2}$.
\end{proof}

\subsubsection{Proof of continuous time $H^1$ growth}
We now upgrade Lemma~\ref{lem:discretegrowth} to continuous time and complete the proof of Lemma~\ref{lem:H1growth}.

\begin{proof}[Proof of Lemma~\ref{lem:H1growth}]
As before, we may assume without loss of generality that $\|f_0\|_{L^2} = 1$. Moreover, it is sufficient to prove the estimate for all $j \ge j_0$ with $j_0$ depending only on an upper bound for $\|f_0\|_{\dot{H}^1}$. Throughout this proof, $c > 0$ denotes the constant from Lemma~\ref{lem:discretegrowth}.

We begin by using Lemma~\ref{lem:discretegrowth} to obtain lower bounds on specific derivatives of $f_j$ as well as the solution at the intermediate time $f(T_j + t_{j+1}) = f_j \circ \phi_{j+1}^{-1}$. A straightforward computation with the chain rule shows that for any $g \in H^1$ and $j \in \N$ we have
\begin{equation} \label{eq:trivialchain}
\|\grad (g\circ \phi_j^{-1})\|_{L^2} \le (K_j+2)\|\grad g\|_{L^2}  
\end{equation}
as well as the same bound with $\phi_j^{-1}$ replaced by $\psi_j^{-1}$. Thus,
$$ \|\grad (g \circ \Phi_j^{-1})\|_{L^2} \le (K_j+2)^2\|\grad g\|_{L^2} \le (K_j^2 + 5K_j)\|\grad g\|_{L^2}. $$
Iterating this estimate and using $\sum_{j=1}^\infty K_j^{-1} < \infty$ we see that there is $C_1$ independent of $j$ such that 
\begin{align} 
\|f_j\|_{\dot{H}^1} & \le C_1 \|f_0\|_{\dot{H}^1} \prod_{n=1}^j K_n^2, \label{eq:H1ubd1}\\
\|f_j \circ \phi_{j+1}^{-1}\|_{\dot{H}^1} &\le C_1 \|f_0\|_{\dot{H}^1} K_{j+1} \prod_{n=1}^j K_n^2. \label{eq:H1ubd2}
\end{align}
For simplicity of notation, let $\bar{f}_{j+1} = f_j \circ \phi_{j+1}^{-1}$. Since
\begin{equation}
  \grad f_{j+1} (x,y) = 
  \begin{pmatrix}
  (\partial_x \bar{f}_{j+1}) \circ \psi_{j+1}^{-1}(x,y) - K_{j+1}S'(N_{j+1}x)(\partial_y \bar{f}_{j+1})\circ \psi_{j+1}^{-1}(x,y) \\ 
  (\partial_y \bar{f}_{j+1})\circ \psi_{j+1}^{-1}(x,y)
  \end{pmatrix},
\end{equation}
it follows from Lemma~\ref{lem:discretegrowth} and \eqref{eq:H1ubd2} that
$$c \|f_0\|_{\dot{H}^1} K_{j+1}^2\prod_{n=1}^{j} K_n^2 \le  \|\grad f_{j+1}\|_{L^2} \le K_{j+1}\|\partial_y \bar{f}_{j+1}\|_{L^2} + 2C_1\|f_0\|_{\dot{H}^1}K_{j+1}\prod_{n=1}^j K_n^2. $$
Thus, if $j$ is large enough so that $cK_{j+1}^2/2 \ge 2C_1 K_{j+1}$, then 
\begin{equation} \label{eq:partialybound}
    \|\partial_y \bar{f}_{j+1}\|_{L^2} \ge \frac{c}{2}\|f_0\|_{\dot{H}^1}K_{j+1}\prod_{n=1}^j K_n^2.
\end{equation}
Note that since $c$ depends only on an upper bound for $\|f_0\|_{\dot{H}^1}$, so does this choice of $j$. A similar argument using \eqref{eq:partialybound} and \eqref{eq:H1ubd1} shows that for $j$ sufficiently large we also have 
\begin{equation}\label{eq:partialxbound}
    \|\partial_x f_j\|_{L^2} \ge \frac{c}{4}\|f_0\|_{\dot{H}^1} \prod_{n=1}^j K_n^2.
\end{equation}

We are now ready to complete the proof. Let $j_0$ be large enough so that both \eqref{eq:partialybound} and \eqref{eq:partialxbound} hold for all $j \ge j_0 - 1$. Fix $j \in \N$ with $j \ge j_0$ and $t \in [T_{j-1},T_{j-1}+t_j]$. Then, defining $\phi_{j,t}(x,y) = (x+\alpha_j t_j \zeta_j(t-T_{j-1})S(N_j y), y)$, for such $t$ we have
$$ \grad f(t,x,y) = 
\begin{pmatrix}
    (\partial_x f_{j-1}) \circ \phi_{j,t}^{-1}(x,y) \\ 
    (\partial_y f_{j-1})\circ \phi_{j,t}^{-1}(x,y) - K_j \zeta_j(t-T_{j-1})S'(N_j y) (\partial_x f_{j-1}) \circ \phi_{j,t}^{-1}(x,y)
\end{pmatrix},
$$
where $\zeta_j:[0,t_j]\to [0,1]$ is as defined in \eqref{eq:zetaj}. Thus, by \eqref{eq:partialxbound} there holds
\begin{equation}\label{eq:continuoustime1}
    \|\grad f(t)\|_{L^2} \ge \frac{c}{4}\|f_0\|_{\dot{H}^1}\prod_{n=1}^{j-1} K_n^2
\end{equation}
for all $t \in [T_{j-1},T_{j-1}+t_j]$. On the other hand, if $t$ is such that $K_j \zeta_j(t-T_{j-1}) \ge 8C_1/c$, then by \eqref{eq:H1ubd1} and \eqref{eq:partialxbound} we have
\begin{align} 
\|\grad f(t)\|_{L^2} &\ge K_j \zeta_j(t-T_{j-1})\|\partial_x f_{j-1}\|_{L^2} - \|f_{j-1}\|_{\dot{H}^1} \\
&\ge \frac{1}{2}K_j \zeta_j(t-T_{j-1})\|\partial_x f_{j-1}\|_{L^2} \\ 
& \ge \frac{c}{8}K_j \zeta_j(t-T_{j-1})\|f_0\|_{\dot{H}^1}\prod_{n=1}^{j-1} K_n^2. \label{eq:continuoustime2}
\end{align}
Combining \eqref{eq:continuoustime1} and \eqref{eq:continuoustime2} proves \eqref{eq:H1growthlemma} for $t \in [T_{j-1},T_{j-1}+t_j]$. The estimate on the other half of the time interval $[T_{j-1},T_j]$ follows in a similar way using \eqref{eq:H1ubd2} and \eqref{eq:partialybound}.

\end{proof}

\subsection{Balanced growth} \label{sec:upperbounds}

Lemma~\ref{lem:H1growth} establishes the first hypothesis of Proposition~\ref{prop:ADcriteria}, and moreover shows that in order to obtain the balanced growth hypothesis we need to prove that for some $s \in (0,1]$ the $\dot{H}^{1+s}$ norm of $\|f_j\|_{\dot{H}^{1+s}}$ grows at most like $\prod_{n=1}^{j} K_n^{2s+2}$ with $j$. This is the content of the next lemma. 

\begin{lemma}[Upper bound in $H^{\sigma}$, $\sigma > 1$]\label{lem:uppermain}
    Fix $s \in (2/5,1/2)$ and let $h_j$ be as in the statement of Lemma~\ref{lem:H1growth}. For every mean-zero $f_0 \in H^{1+s} \cap W^{1,\infty}$
    there exists a constant $C > 0$ depending only on $s$ and an upper bound for $\|f_0\|_{W^{1,\infty}}/\|f_0\|_{L^2}$ such that for every $j \in \N$ and $t \in [T_{j-1},T_j]$ the solution of \eqref{eq:AE} satisfies
\begin{equation}
    \|f(t)\|_{\dot{H}^{1+s}} \le C h_j^{1+s}(t) \|f_0\|_{\dot{H}^{1+s}}\prod_{n=1}^{j-1}K_n^{2+2s}.
\end{equation}
\end{lemma}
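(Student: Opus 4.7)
The plan is to mirror the structure of Lemma~\ref{lem:H1growth}: first establish the bound at the discrete times $T_j$ by tracking how $\|f_j\|_{\dot H^{1+s}}$ grows across each shear composition, and then upgrade to arbitrary $t\in[T_{j-1},T_j]$ by applying the same single-step estimate to the partial flow map on $[T_{j-1},t]$, where the effective amplitude becomes $K_j\zeta_j(t-T_{j-1})$ and matches $h_j(t)$ up to the baseline values in its definition. The main effort is the single-step estimate across one sawtooth shear; the restriction $s<1/2$ and the $W^{1,\infty}$ assumption on $f_0$ both enter there.

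For one horizontal step $\bar f_j = f_{j-1}\circ\phi_j^{-1}$ with $\phi_j^{-1}(x,y)=(x-\alpha_jt_jS(N_jy),y)$, I would use the separated Sobolev equivalence $\|g\|_{\dot H^{1+s}}^2\sim\|D_x^{1+s}g\|_{L^2}^2+\|D_y^{1+s}g\|_{L^2}^2$ available from the Fourier conventions of Section~\ref{sec:Fourier}. The $x$-direction is preserved exactly because $D_x^{1+s}$ commutes with the $y$-dependent shift in $\phi_j^{-1}$ and $\phi_j^{-1}$ is area preserving. For the $y$-direction I would use the chain-rule identity
\[\partial_y\bar f_j = (\partial_yf_{j-1})\circ\phi_j^{-1} - K_jS'(N_jy)(\partial_xf_{j-1})\circ\phi_j^{-1}\]
and control $\|D_y^s(\cdot)\|_{L^2}$ of the right-hand side via a fractional Leibniz/Kato--Ponce inequality (cf.\ the appendix). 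The ``smooth'' part of the product $S'(N_jy)\cdot h$, with $h=(\partial_xf_{j-1})\circ\phi_j^{-1}$, places the $s$ derivatives onto $h$ (which is in turn controlled in $\dot H^s_y$ by $K_j^s\|f_{j-1}\|_{\dot H^{1+s}}$ through a Fourier-side composition estimate for shears), producing a main contribution of order $K_j^{1+s}\|f_{j-1}\|_{\dot H^{1+s}}$, while the ``rough'' part places them onto $S'(N_j\cdot)$, whose $\dot H^s$ norm equals a finite multiple of $N_j^s$ precisely because $s<1/2$, with the leftover $L^\infty$ factor absorbed by $\|\grad f_{j-1}\|_{L^\infty}$. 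Combined with the analogous estimate across $\psi_j^{-1}$, one obtains a single-step recursion of the form
\[\|f_j\|_{\dot H^{1+s}}\leq K_j^{2+2s}\|f_{j-1}\|_{\dot H^{1+s}} + CK_j^{2+s}N_j^{s}\,\|\grad f_{j-1}\|_{L^\infty},\]
where the $W^{1,\infty}$ norm is propagated via the bi-Lipschitz bound $\|\grad f_j\|_{L^\infty}\leq\|\grad f_0\|_{L^\infty}\prod_{n\leq j}(1+K_n)^2$.

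Telescoping yields $\|f_j\|_{\dot H^{1+s}}\lesssim(\|f_0\|_{\dot H^{1+s}}+\|f_0\|_{W^{1,\infty}}\Sigma)\prod_{n\leq j}K_n^{2+2s}$ for a parameter-dependent series $\Sigma$, and using the Poincar\'e-type inequality $\|f_0\|_{L^2}\leq\|f_0\|_{\dot H^{1+s}}$ for mean-zero functions on $\T^2$ recasts this in the stated form with $C$ depending only on $s$ and the ratio $\|f_0\|_{W^{1,\infty}}/\|f_0\|_{L^2}$. The threshold $s>2/5$ is what makes $\Sigma$ (together with related sums arising from distributing derivatives asymmetrically between the two shears of a pair) finite given the parameter choices $\alpha_j\sim j^42^{-j}$, $N_j=2^j$, and $K_j\sim Mj^{5/2}$; increasing the logarithmic power in $\omega$ above $4$ loosens this threshold, consistent with the remark following the statement. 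The continuous-time bound for $t\in[T_{j-1},T_j]$ then follows verbatim by applying the single-step estimate to the partial flow map with $K_j$ replaced by $K_j\zeta_j(t-T_{j-1})$. The principal obstacle is the quantitative Kato--Ponce step: since $S'(N_j\cdot)$ is only in $\dot H^s$ for $s<1/2$, one must verify carefully that the commutator term yields only a $K_j^{2+s}N_j^{s}\|\grad f\|_{L^\infty}$ error and does not spoil the sharp $K_j^{2+2s}$ amplification per shear pair that is needed to close the iteration against the lower bound of Lemma~\ref{lem:H1growth}.
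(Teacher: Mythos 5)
Your proposal follows essentially the same route as the paper's proof: split the $\dot{H}^{1+s}$ norm into $x$- and $y$-derivative contributions (the $x$-part being essentially unaffected by a horizontal shear), use the chain rule plus a Kato--Ponce commutator with $\|S'(N_j\cdot)\|_{\dot{H}^s}\lesssim N_j^s$ (whence $s<1/2$), absorb the commutator error by propagating the $W^{1,\infty}$ norm like $\prod_n K_n^2$, and iterate using the convergence of $\sum_j K_j^{-s}$ (whence $s>2/5$) and of $\sum_j \bigl(N_j\prod_{n<j}K_n^{-2}\bigr)^s$. The only notable difference is bookkeeping at the iteration stage: you telescope the one-step recursion additively and close with $\|f_0\|_{L^2}\le\|f_0\|_{\dot{H}^{1+s}}$, whereas the paper keeps the recursion in multiplicative form and invokes the lower bound of Lemma~\ref{lem:discretegrowth} together with reverse interpolation to control the ratio $\|f_{j-1}\|_{W^{1,\infty}}/\|f_{j-1}\|_{\dot{H}^{1+s}}$; both yield the stated dependence of the constant.
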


Unlike in proof of Lemma~\ref{lem:H1growth}, the extension from discrete to continuous time is immediate and will only serve to complicate the notation. Thus, for simplicity we will prove the bound only for $t \in \{T_j\}_{j=1}^\infty$. In particular, in the notation of Lemma~\ref{lem:uppermain}, we prove in this section that there is a constant $C$ depending only an upper bound for $\|f_0\|_{\dot{H}^1}/\|f_0\|_{L^2}$ such that 
\begin{equation} \label{eq:upperdiscrete}
    \|f_j\|_{\dot{H}^{1+s}} \le \exp\left(C\left(1+\frac{\|f_0\|_{W^{1,\infty}}}{\|f_0\|_{\dot{H}^1}}\right)\right)\|f_0\|_{\dot{H}^{1+s}}\prod_{n=1}^{j}K_n^{2+2s}
\end{equation}
for every $j \in \N$.

\subsubsection{Proof of discrete-time upper bound}

We will bound $\|f_j\|_{\dot{H}^{1+s}}$ by computing $\grad(f_{j-1}\circ \Phi_j^{-1})$ and then estimating the $\dot{H}^s$ norm of the result. We thus begin with a bound for $\|f \circ \Phi_j^{-1}\|_{\dot{H}^s}$ that follows easily from interpolation theory.

\begin{lemma} \label{lem:interp1}
	Let $s \in (0,1/2)$. For every mean-zero $f \in H^{s}$ we have
	\begin{align} 
 \|(f \circ \phi^{-1}_j)\|_{\dot{H}^s} \le (K_j+2)^{s} \|f\|_{\dot{H}^s}.
 \end{align}
 Moreover, the same estimate holds with $\phi_j^{-1}$ replaced with $\psi_j^{-1}$.
\end{lemma}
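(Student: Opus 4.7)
The plan is to obtain the bound by complex interpolation between the $L^2$ and $\dot{H}^1$ endpoints. For the $L^2$ endpoint, the Jacobian determinant of the shear $\phi_j^{-1}$ is identically $1$, so $\phi_j^{-1}$ preserves Lebesgue measure and the composition operator $T_j : f \mapsto f \circ \phi_j^{-1}$ is an isometry on $L^2$. For the $\dot{H}^1$ endpoint, the chain rule gives $\nabla(f \circ \phi_j^{-1}) = ((\nabla \phi_j^{-1})^T \nabla f) \circ \phi_j^{-1}$, and a direct computation yields
$$\nabla \phi_j^{-1}(x,y) = \begin{pmatrix} 1 & -K_j S'(N_j y) \\ 0 & 1 \end{pmatrix}.$$
A routine eigenvalue calculation for $(\nabla \phi_j^{-1})^T \nabla \phi_j^{-1}$, whose trace is $2+K_j^2$ and determinant is $1$, shows that the operator norm of $\nabla \phi_j^{-1}$ is at most $\sqrt{K_j^2+2} \le K_j+2$. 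Combined with measure preservation, this gives $\|T_j f\|_{\dot{H}^1} \le (K_j+2)\|f\|_{\dot{H}^1}$.

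Since $T_j$ preserves integrals, it restricts to the closed mean-zero subspace of $L^2$, which via Fourier series is canonically identified with a weighted $\ell^2$ space on which $[\dot{H}^0,\dot{H}^1]_s = \dot{H}^s$. Standard complex interpolation for operators bounded on both endpoints then delivers
$$\|T_j f\|_{\dot{H}^s} \le 1^{1-s}(K_j+2)^s \|f\|_{\dot{H}^s} = (K_j+2)^s \|f\|_{\dot{H}^s}$$
for every $s \in (0,1)$, in particular for the range $s \in (0,1/2)$ in the statement. The estimate with $\psi_j^{-1}$ in place of $\phi_j^{-1}$ is identical after swapping the roles of $x$ and $y$ in the Jacobian.

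No serious obstacle is anticipated. The crux is that the $L^2$ side has constant exactly $1$, which is what allows interpolation to produce the sharp exponent $(K_j+2)^s$; a direct Gagliardo-seminorm calculation, combined with only a bi-Lipschitz bound for $\phi_j^{-1}$, would instead yield the strictly weaker estimate $(K_j+2)^{1+s}$, which would be too lossy for the subsequent applications in Lemma~\ref{lem:uppermain}.
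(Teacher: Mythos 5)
Your argument is correct and is essentially identical to the paper's: the paper likewise combines the $L^2$ isometry (measure preservation), the $\dot{H}^1$ bound $\|\grad(g\circ\phi_j^{-1})\|_{L^2}\le (K_j+2)\|\grad g\|_{L^2}$ from the chain rule, and operator interpolation on the $\dot{H}^\sigma$ scale (its Lemma~\ref{lem:interp}) to get the exponent $(K_j+2)^s$. The only cosmetic difference is your explicit invocation of complex interpolation on the mean-zero Fourier side, which on this Hilbert scale coincides with the cited interpolation theorem.
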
 

\begin{proof}
Since $\|f\circ \phi_{j}^{-1}\|_{L^2} = \|f\|_{L^2}$ the result follows immediately from \eqref{eq:trivialchain} and the interpolation theorem Lemma~\ref{lem:interp}.
\end{proof}

Now we use Lemma~\ref{lem:interp1} and the commutator estimate of Lemma~\ref{lem:Kato} to bound $\|f \circ \phi_{j}^{-1}\|_{\dot{H}^{1+s}}$.

\begin{lemma} \label{lem:upper2}
	For any $s \in (0,1/2)$ there exists a constant $C$ depending only on $s$ such that for every $f \in \dot{H}^{s+1} \cap W^{1,\infty}$ there holds 
\begin{equation} \label{eq:upper2}
\|f \circ \phi_j^{-1}\|_{\dot{H}^{s+1}} \le K_j^{s+1}\|f\|_{\dot{H}^{s+1}} + C(K_j \|f\|_{\dot{H}^{s+1}}+ K_j N_j^s \|f\|_{W^{1,\infty}}). 
\end{equation}
Moreover, the same estimate holds with $\phi_j^{-1}$ replaced by $\psi_j^{-1}$.
\end{lemma}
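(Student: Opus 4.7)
My plan is to pass to Fourier side and exploit the identity
\begin{equation}
\|g\|_{\dot H^{1+s}}^2 \;=\; \|D^s\partial_x g\|_{L^2}^2 + \|D^s\partial_y g\|_{L^2}^2,
\end{equation}
so it suffices to control each of these two pieces for $g = f\circ\phi_j^{-1}$. A direct chain rule computation (using $\phi_j^{-1}(x,y)=(x-\alpha_j t_j S(N_j y),y)$) gives
\begin{equation}
\partial_x(f\circ\phi_j^{-1}) = (\partial_x f)\circ\phi_j^{-1}, \qquad \partial_y(f\circ\phi_j^{-1}) = (\partial_y f)\circ\phi_j^{-1} - K_j\,S'(N_j y)\,(\partial_x f)\circ\phi_j^{-1},
\end{equation}
recalling $K_j=\alpha_j N_j t_j$. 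The terms $(\partial_x f)\circ\phi_j^{-1}$ and $(\partial_y f)\circ\phi_j^{-1}$ are easy: applying Lemma~\ref{lem:interp1} to $\partial_x f$ and $\partial_y f$ yields $\|D^s((\partial_* f)\circ\phi_j^{-1})\|_{L^2}\le (K_j+2)^s\|f\|_{\dot H^{1+s}}$, which gets absorbed into $C K_j\|f\|_{\dot H^{1+s}}$.

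The whole difficulty is the remaining term $K_j\, S'(N_j y)\,g$ with $g:=(\partial_x f)\circ\phi_j^{-1}$. I would split
\begin{equation}
D^s\bigl(S'(N_j y)\,g\bigr) \;=\; S'(N_j y)\,D^s g \;+\; [D^s,\,S'(N_j y)]\,g.
\end{equation}
For the first summand, using $\|S'\|_{L^\infty}\le 1$ and again Lemma~\ref{lem:interp1} on $\partial_x f$, one gets the bound $\|D^s g\|_{L^2}\le (K_j+2)^s\|f\|_{\dot H^{1+s}}$, and the Bernoulli-type inequality $(K_j+2)^s\le K_j^s+2sK_j^{s-1}$ produces exactly the leading term $K_j^{1+s}\|f\|_{\dot H^{1+s}}$ in the lemma, with a remainder of size $K_j^s\|f\|_{\dot H^{1+s}}\le K_j\|f\|_{\dot H^{1+s}}$. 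For the commutator, I would invoke the Kato-type commutator estimate from the appendix (Lemma~\ref{lem:Kato}) in the form
\begin{equation}
\|[D^s,\,a]\,g\|_{L^2} \;\le\; C\,\|D^s a\|_{L^2}\,\|g\|_{L^\infty}
\end{equation}
applied with $a(y)=S'(N_j y)$. Since $a$ depends only on $y$, its 2D $\dot H^s$ norm reduces (up to a harmless constant) to the 1D $\dot H^s$ norm of $\mathrm{sgn}(N_j y)$, giving $\|D^s a\|_{L^2(\T^2)}\le C_s N_j^s$; this is where the restriction $s<1/2$ enters, since $\|\mathrm{sgn}\|_{\dot H^s(\T)}$ is finite only in that range (the Fourier coefficients decay as $1/k$). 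Finally $\|g\|_{L^\infty}=\|\partial_x f\|_{L^\infty}\le\|f\|_{W^{1,\infty}}$, so the commutator contributes $C\,K_j N_j^s\|f\|_{W^{1,\infty}}$, matching the last term in the lemma.

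Assembling these pieces, the $\partial_x$-part and the $(\partial_y f)\circ\phi_j^{-1}$-part are subsumed in $CK_j\|f\|_{\dot H^{1+s}}$, the $S'\,D^s g$ part yields $K_j^{1+s}\|f\|_{\dot H^{1+s}} + CK_j^s\|f\|_{\dot H^{1+s}}$, and the commutator yields $CK_jN_j^s\|f\|_{W^{1,\infty}}$; summing all contributions produces the claimed inequality. The case of $\psi_j^{-1}$ is identical after exchanging the roles of the variables. The main obstacle is the commutator estimate and the need to make the constant in front of the leading $K_j^{1+s}\|f\|_{\dot H^{1+s}}$ term exactly $1$; the first is handled by exploiting that $S'$ is bounded and only $H^s$ with $s<1/2$, while the second is handled by the elementary inequality $(K_j+2)^s\le K_j^s + 2sK_j^{s-1}$, with all lower-order factors of $K_j^s$ absorbed into $CK_j\|f\|_{\dot H^{1+s}}$ using $K_j\ge 1$.
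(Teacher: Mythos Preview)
Your proposal is correct and follows essentially the same route as the paper: compute $\partial_x$ and $\partial_y$ of $f\circ\phi_j^{-1}$ via the chain rule, bound the easy pieces with Lemma~\ref{lem:interp1}, and handle the product $K_j S'(N_j y)\,(\partial_x f)\circ\phi_j^{-1}$ by writing $D^s$ of it as $S'\cdot D^s g$ plus a commutator, bounding the latter with the Kato--Ponce inequality (Lemma~\ref{lem:Kato}) and the fact that $S'(N_j\cdot)\in\dot H^s$ for $s<1/2$. The only cosmetic differences are that the paper uses the triangle inequality $\|g\|_{\dot H^{1+s}}\le\|D^s\partial_x g\|_{L^2}+\|D^s\partial_y g\|_{L^2}$ rather than your squared identity, and splits $(K_j+2)^s\le K_j^s+2^s$ via subadditivity rather than your Bernoulli bound; both choices lead to the same conclusion.
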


\begin{proof}
Recall the notation and conventions of Section~\ref{sec:Fourier}. First, note that by the definition of $\|\cdot\|_{\dot{H}^{s+1}}$ and the triangle inequality, for any $g \in \dot{H}^{s+1}$ we have
$$ \|g\|_{\dot{H}^{s+1}} = \|D^s \grad g\|_{L^2} \le \|D^s \partial_x g\|_{L^2} + 
\|D^s \partial_y g\|_{L^2}.$$
We will estimate each term for $g = f\circ \phi_j^{-1}$. For the first term, we note that since $\partial_x (f\circ \phi_j^{-1}) = (\partial_x f)\circ \phi_j^{-1}$ it follows from Lemma~\ref{lem:interp1} that there is a constant $C_1$ such that
$$\|D^s \partial_x (f\circ \phi_j^{-1})\|_{L^2} \le (K_j+2)^s\|\partial_x f\|_{\dot{H}^s} \le C_1(K_j+2)^s\|f\|_{\dot{H}^{1+s}}. $$
For the second term, we begin by computing 
$$ \partial_y (f\circ \phi_j^{-1}) = -K_j S'(N_j y)(\partial_x f)\circ \phi_j^{-1} + (\partial_y f)\circ \phi_j^{-1}. $$
Applying $D^s$ and introducing a commutator term we have
\begin{equation}\label{eq:fourterms}
\begin{aligned}
D^s \partial_y (f\circ \phi_j^{-1}) &= -K_jS'(N_j y) D^s ((\partial_x f)\circ \phi_j^{-1}) \\ 
& \quad+ [K_j S'(N_j y) D^s((\partial_x f)\circ \phi_j^{-1}) - D^s(K_j S'(N_j y)(\partial_x f) \circ \phi_j^{-1})] \\  
& \quad + D^s ((\partial_y f)\circ \phi_j^{-1}).
\end{aligned}
\end{equation}
By Lemma~\ref{lem:interp1}, we have 
$$ \|K_j S'(N_j \cdot) D^s ((\partial_x f)\circ \phi_j^{-1})\|_{L^2} = K_j \|(\partial_x f) \circ \phi_j^{-1}\|_{\dot{H}^s} \le K_j (K_j + 2)^s \|f\|_{\dot{H}^{s+1}}\le (K_j^{s+1} + 2^s K_j)\|f\|_{\dot{H}^{s+1}}.$$
For the commutator term, we use the homogeneous Kato-Ponce inequality given in Lemma~\ref{lem:Kato} to obtain
\begin{align}
\|K_j S'(N_j \cdot) D^s((\partial_x f)\circ \phi_j^{-1}) - D^s(K_j S'(N_j \cdot)(\partial_x f) \circ \phi_j^{-1})\|_{L^2} \le C_2 K_j \|D_y^s S'(N_j \cdot)\|_{L^2}\|(\partial_x f)\circ \phi_j^{-1}\|_{L^\infty}
\end{align}
for some constant $C_2$ depending on $s$.
Using that $S' \in \dot{H}^s(\T)$ since $s < 1/2$ and the fact that 
$$ \widehat{S'(N_j\cdot)}(\ell) = \begin{cases}
    \widehat{S'}(\ell/N_j) & \ell/N_j \in \Z \\ 
    0 & \ell/N_j \not \in \Z
\end{cases} $$
because $N_j$ is an integer, one can show that 
$\|D_y^s S'(N_j \cdot)\|_{L^2} \le C_3 N_j^s$ for $C_3 > 0$ depending on $s$. Thus,
$$ \|K_j S'(N_j \cdot) D^s((\partial_x f)\circ \phi_j^{-1}) - D^s(K_j S'(N_j \cdot)(\partial_x f) \circ \phi_j^{-1})\|_{L^2} \le C_2 C_3 K_j N_j^s \|f\|_{W^{1,\infty}}. $$
Bounding the final term in \eqref{eq:fourterms} using Lemma~\ref{lem:interp1} and combining all of our other estimates completes the proof of the estimate of $\|f\circ \phi_j^{-1}\|_{\dot{H}^{s+1}}$. The estimate with $\psi_j^{-1}$ is obtained in the same way by reversing the roles of $x$ and $y$.
\end{proof}

\begin{proof}[Proof of \eqref{eq:upperdiscrete}]
Applying Lemma~\ref{lem:upper2} twice we deduce that there exists a constant $C_1$ depending only on $s$ such that for every mean-zero $f_0 \in H^{1+s} \cap W^{1,\infty}$ and $j \in \N$ we have
\begin{equation}\label{eq:fullcomp}
    \|f_{j}\|_{\dot{H}^{s+1}} \le K_j^{2s+2}\|f_{j-1}\|_{\dot{H}^{s+1}}\left(1 + C_1K_j^{-s} + C_1K_j^{-s}N_j^s \frac{\|f_{j-1}\|_{W^{1,\infty}}}{\|f_{j-1}\|_{\dot{H}^{s+1}}}\right).
\end{equation}
Next, note that straightforward estimates similar to previous computations yield
\begin{equation}
    \|f_{j-1}\|_{W^{1,\infty}} \le C_2\|f_0\|_{W^{1,\infty}}\prod_{n=1}^{j-1} K_n^2,
\end{equation}
while interpolation and Lemma~\ref{lem:discretegrowth} give 
$$\|f_{j-1}\|_{\dot{H}^{1+s}} \ge \frac{\|f_{j-1}\|_{\dot{H}^1}^{1+s}}{\|f_{j-1}\|_{L^2}^{s}} = \frac{\|f_{j-1}\|_{\dot{H}^1}^{1+s}}{\|f_0\|_{L^2}^{s}} \ge c_1 \|f_0\|_{\dot{H}^1}\prod_{n=1}^{j-1} K_n^{2 + 2s} $$
for a constant $c_1$ that depends only on an upper bound for $\|f_0\|_{\dot{H}^1}/\|f_0\|_{L^2}$. Thus, there is a constant $C_3$ depending only on an upper bound for $\|f_0\|_{\dot{H}^1}/\|f_0\|_{L^2}$ such that
\begin{equation}
\frac{\|f_{j-1}\|_{W^{1,\infty}}}{\|f_{j-1}\|_{\dot{H}^{1+s}}} \le C_3\frac{\|f_0\|_{W^{1,\infty}}}{\|f_0\|_{\dot{H}^1}}\prod_{n=1}^{j-1} K_n^{-2s}.
\end{equation}
Putting this bound into \eqref{eq:fullcomp} we obtain 
\begin{equation}\label{eq:iterate1}
\|f_{j}\|_{\dot{H}^{s+1}} \le K_j^{2s+2}\|f_{j-1}\|_{\dot{H}^{s+1}}\left(1+C_1K_j^{-s} + C_3\frac{\|f_0\|_{W^{1,\infty}}}{\|f_0\|_{\dot{H}^1}}\left(N_j\prod_{n=1}^{j-1} K_n^{-2}\right)^s\right).
\end{equation}
Since $K_j \ge j^{5/2}$ and $s > 2/5$, we have that $\sum_{j=1}^\infty K_j^{-s} < \infty$. Moreover, as $N_j = 2^j$ and $\prod_{n=1}^{j-1} K_n^2 \ge (j-1)!$ we clearly have
$$\sum_{n=1}^\infty \left(N_j \prod_{n=1}^{j-1} K_n^{-2}\right)^s < \infty. $$
Thus, by iterating \eqref{eq:iterate1} we see that there is a constant $C_4$ depending only on $s$ and an upper bound for $\|f_0\|_{\dot{H}^1}/\|f_0\|_{L^2}$ such that for every $j \in \N$ we have
$$\|f_{j}\|_{\dot{H}^{s+1}} \le \exp\left(C_4\left(1+\frac{\|f_0\|_{W^{1,\infty}}}{\|f_0\|_{\dot{H}^1}}\right)\right)\|f_0\|_{\dot{H}^{s+1}}\prod_{n=1}^j K_n^{2s+2}.$$
\end{proof}

\subsection{Concluding the proof of Theorem~\ref{thrm:main}} \label{sec:finish}

With Lemmas~\ref{lem:H1growth} and~\ref{lem:uppermain} at hand, the proof of Theorem~\ref{thrm:main} is essentially immediate from Proposition~\ref{prop:ADcriteria}.

\begin{proof}[Proof of Theorem~\ref{thrm:main}]
It suffices to prove the anomalous dissipation portion of the statement, as once this is established the non-uniqueness for a suitably modified velocity field follows as in \cite{DEIJ22}. Let $u:[0,T_*] \times \T^2 \to \R^2$ be as defined in Section~\ref{sec:udefinition} with the parameters chosen as in Section~\ref{sec:parameters} and the constant $M \ge 2$ picked large enough so that the conclusion of Lemma~\ref{lem:H1growth} holds. Recall that by a simple scaling argument it is sufficient to prove Theorem~\ref{thrm:main} for the particular time $T = T_*$. As described in Section~\ref{sec:parameters}, the fact that $u$ has the regularity claimed in Theorem~\ref{thrm:main} follows easily from the sufficient condition \eqref{eq:velconditions} and the definitions of the parameters $\alpha_j$, $N_j$, and $t_j$. Fix any $s \in (2/5,1/2)$ and mean-zero initial data $f_0 \in H^{1+s} \cap W^{1,\infty}$. We apply Proposition~\ref{prop:ADcriteria} with $\sigma = 1+s$. The fact that the first hypothesis holds is the content of Lemma~\ref{lem:H1growth}, specifically \eqref{eq:blowup}. For the second hypothesis, it follows from Lemmas~\ref{lem:H1growth} and~\ref{lem:uppermain} that for all $t \in [0,T_*)$ the solution of \eqref{eq:AE} satisfies
$$ \|f(t)\|_{L^2}^s \|f(t)\|_{\dot{H}^{1+s}} \le C_1 \|f(t)\|_{\dot{H}^1}^{1+s} \quad \text{with} \quad  C_1 = \frac{C}{c^{1+s}}\frac{\|f_0\|^s_{L^2}\|f_0\|_{\dot{H}^{1+s}}}{\|f_0\|^{1+s}_{\dot{H}^{1}}},$$
where $c$ and $C$ are as in the statements of the lemmas, and in particular only depend on an upper bound for $\|f_0\|_{W^{1,\infty}}/{\|f_0\|_{L^2}}$. Thus, Theorem~\ref{thrm:main} follows from Proposition~\ref{prop:ADcriteria} and there is a lower bound for the amount of energy dissipated on $[0,T_*]$ with the dependencies claimed in Remark~\ref{rem:constants}.
\end{proof}

\subsection{Proof of Theorem~\ref{thrm:2}} \label{sec:thrm2}

We now prove Theorem~\ref{thrm:2}, which amounts to appropriately choosing the parameters $(\alpha_j, N_j, t_j)$ and estimating the Lipschitz norm of the solution to the full advection-diffusion equation. Before beginning the proof, we notice that a careful reading of the proof of Theorem~\ref{thrm:main} above shows that the only properties of the parameters defining the velocity that we needed to deduce anomalous dissipation for every mean-zero $f_0 \in H^{s+1}$ and some fixed $s \in (0,1/2)$ were the following:

\begin{itemize}
    \item $\sum_{j=1}^\infty t_j < \infty$;
    \item $\sum_{j=1}^\infty t_j \prod_{n=1}^{j-1}K_n^4 = \infty$;
    \item $\sum_{j=1}^\infty K_j^{-s} < \infty$;
    \item $\sum_{j=1}^\infty \left(N_j \prod_{n=1}^{j-1} K_n^{-2}\right)^s < \infty$;
    \item there exists $C \ge 1$ such that $1 \le K_{j+1}/K_j \le C$ and for some $M$ sufficiently large $K_j \ge M$;
    \item $N_j$ is an integer.
\end{itemize}

\begin{proof}[Proof of Theorem~\ref{thrm:2}]

Fix $\alpha \in (0,1)$ and $0 \le \beta < (1-\alpha)/2$. For $\epsilon \in (0,1/6)$  to be chosen sufficiently small depending on the gap between $\beta$ and $(1-\alpha)/2$, we apply the proof of Theorem~\ref{thrm:main} with $s = 1/2 - \epsilon$ and the parameters $(\alpha_j, N_j, t_j)$ chosen as 
$$ N_j = j^{4j}, \quad \alpha_j = N_j^{-\alpha}, \quad \text{and} \quad t_j = Mj^{\frac{2}{1-3\epsilon}} j^{-4(1-\alpha)j}, $$
where $M$ is taken sufficiently large. We define our velocity field as in Section~\ref{sec:udefinition} with the parameters as given above. We have $K_j = Mj^{\frac{2}{1-3\epsilon}}$ and it is straightforward to check using Stirling's formula that each of the six conditions stated before the start of the proof are satisfied. The anomalous dissipation claimed in Theorem~\ref{thrm:2} then follows from the proof of Theorem~\ref{thrm:main}.

It remains only to verify the regularity of the solutions $f^\kappa$ claimed in \eqref{eq:scalarregularity}. To estimate $\|f^\kappa(t)\|_{C^\beta}$ we will bound $\|\grad f^\kappa(t)\|_{L^\infty}$ and then interpolate with $\|f^\kappa(t)\|_{L^\infty}$. Let $\{T_j\}_{j \ge 0}$ be as defined in the proof of Theorem~\ref{thrm:main}. For $j \in \N$ and $t \in [T_{j-1}, T_{j-1} + t_j)$ we have
\begin{align}
    & \partial_t (\partial_x f^\kappa) + \alpha_j \psi\left(\frac{t-T_{j-1}}{t_j}\right) S(N_j y)\partial_x (\partial_x f^\kappa) = \kappa \Delta (\partial_x f^\kappa), \label{eq:viscous1} \\ 
    & \partial_t (\partial_y f^\kappa) + \alpha_j \psi\left(\frac{t-T_{j-1}}{t_j}\right) S(N_j y)\partial_x (\partial_y f^\kappa) = \kappa \Delta (\partial_y f^\kappa) - \frac{1}{t_j}\psi\left(\frac{t-T_{j-1}}{t_j}\right) K_j S'(N_j y)\partial_x f^\kappa. \label{eq:viscous2}
\end{align}
From \eqref{eq:viscous1} and the maximum principle it follows that 
\begin{equation} \label{eq:viscous3}
\sup_{t \in [T_{j-1},T_{j-1}+t_j]}\|\partial_x f^\kappa(t)\|_{L^\infty} \le \|\partial_x f^\kappa(T_{j-1})\|_{L^\infty}. 
\end{equation}
Then, treating the term involving $\partial_x f^\kappa$ on the right-hand side of \eqref{eq:viscous2} as a forcing term and using \eqref{eq:viscous3} together with the maximum principle again we obtain 
\begin{equation} \label{eq:viscous4}
    \sup_{t \in [T_{j-1},T_{j-1}+t_j]} \|\partial_y f^\kappa(t)\|_{L^\infty} \le \|\partial_y f^{\kappa}(T_{j-1})\|_{L^\infty} + K_j \|\partial_x f^\kappa(T_{j-1})\|_{L^\infty}.
\end{equation}
Combining the previous two bounds we have $$\sup_{t \in [T_{j-1},T_{j-1} + t_j]}\|\grad f^\kappa(t)\|_{L^\infty} \le (2+K_j)\|\grad f^{\kappa}(T_{j-1})\|_{L^\infty}.$$
Applying the same argument on the time interval $[T_{j-1} + t_j, T_{j})$ gives 
\begin{equation} \label{eq:viscous5}
    \sup_{t \in [T_{j-1},T_{j}]}\|\grad f^\kappa(t)\|_{L^\infty} \le (K_j^2 + C_1K_j)\|\grad f^{\kappa}(T_{j-1})\|_{L^\infty}
\end{equation}
for some $C_1 > 0$. Since $\sum_{j=1}^\infty K_j^{-1} < \infty$, by iterating \eqref{eq:viscous5} and then interpolating the resulting estimate with $\|f^\kappa(t)\|_{L^\infty} \le \|f_0\|_{L^\infty}$ we conclude there is $C_2 > 0$ depending on $f_0$ such that 
\begin{equation} \label{eq:viscous6}
    \sup_{t \in [T_{j-1},T_{j}]}\|f^\kappa(t)\|_{C^\beta(\T^2)} \le C_2 \prod_{n=1}^j K_n^{2\beta} = C_2 M^{2\beta j}(j!)^\frac{4\beta}{1-3\epsilon}.
\end{equation}
Thus, we have 
\begin{equation}
\|f^\kappa\|_{L^2([0,T_*];C^\beta(\T^2))}^2 \le 2 C_2^2\sum_{j=1}^\infty t_j M^{4\beta j} (j!)^\frac{8\beta}{1-3\epsilon} \le 2MC_2^2 \sum_{j=1}^\infty j^4 M^{2\beta j} j^{-4(1-\alpha)j}(j!)^\frac{8\beta}{1-3\epsilon} < \infty
\end{equation}
provided that $\epsilon > 0$ is chosen small enough so that 
$$ \frac{1}{1-3\epsilon} < \frac{1-\alpha}{2\beta}.$$
    
\end{proof}

\section{General Remarks and Questions}

Let us close by giving a few questions for further investigation that might be interesting. 

\subsection{Uniqueness Threshold?}

Using the construction here, it appears that the strongest that the modulus of continuity of the velocity can be is $x|\log(x)|^2$. It is not clear whether one can reach the Osgood threshold using this technique or whether it is even possible. While it is clear that anomalous dissipation is impossible when the velocity field satisfies the Osgood condition, it is not clear that the Osgood condition is really necessary even for uniqueness in the PDE setting. Resolving this gap, for non-uniqueness and/or anomalous dissipation, may be of great theoretical interest.  

\subsection{Autonomous flows}
The example given here relies on the existence of a ``singular time" in the velocity field. While this may be interesting for the purposes of studying anomalous dissipation coming from a potential finite-time singularity in the fluid equations, it is of great mathematical, and possibly physical, interest to construct autonomous flows that give anomalous dissipation. There are two relevant directions that one can think about here. In two dimensions, it is possible that the examples given by Alberti, Bianchini, and Crippa \cite{ABC} or similarly constructed flows can give anomalous dissipation for some or all data. In three dimensions and higher, the existence of an autonomous flow giving anomalous dissipation (and non-uniqueness) for a single data or a large class of data is immediate from the construction here and earlier works, for instance by treating the third dimension as a time variable. There is, however, no example of an autonomous flow in three dimensions giving anomalous dissipation for \emph{all} smooth data. It is possible that one can lift versions of the flow constructed here to serve this purpose. 

\subsection{Forwards-Backwards Principle}

Lemma \ref{fwdbckwdp} gives a sufficient condition for exponential growth of solutions to the transport equation with a time-periodic velocity (the mapping $\Phi$ can be taken to be the associated Lagrangian flow at $t=T_*$, the period of the velocity field). It is not clear whether there exists any time-periodic and \emph{smooth} velocity field for which such an inequality holds.

\section*{Acknowledgements}

T.M.E. acknowledges funding from NSF DMS-2043024 and the Alfred P. Sloan Foundation. K.L. acknowledges funding from NSF DMS-2038056. The authors thank T. Drivas for helpful conversations and suggestions. 

\appendix
\section{Interpolation and commutator estimates} \label{appendix}

In this section we recall some Sobolev interpolation and commutator estimates that are needed in the proof of Lemma~\ref{lem:uppermain}. 

The result from interpolation theory that we require is as follows. For a proof, see e.g. \cite[Corollary 3.2]{chandler2015interpolation}). 

\begin{lemma}[Sobolev space interpolation] \label{lem:interp}
    Fix $0 \le s_0 < s_1 < \infty$ and let $T:\dot{H}^\sigma(\T^2) \to \dot{H}^{\sigma}(\T^2)$ be a bounded, linear operator for $\sigma \in \{s_0, s_1\}$. Then, for every $\theta \in (0,1)$, defining $s_\theta = \theta s_0 + (1-\theta)s_1$ we have that $T:\dot{H}^{s_\theta}(\T^2)\to \dot{H}^{s_\theta}(\T^2)$ is also bounded and 
    $$ \|T\|_{\dot{H}^{s_\theta} \to \dot{H}^{s_\theta}} \le \|T\|_{\dot{H}^{s_0}\to \dot{H}^{s_0}}^\theta \|T\|_{\dot{H}^{s_1}\to \dot{H}^{s_1}}^{1-\theta}. $$
\end{lemma}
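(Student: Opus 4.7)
The plan is to prove this via the Hadamard three-lines lemma, i.e., complex interpolation on a single Hilbert space. The key starting observation is that $D^s=(-\Delta)^{s/2}$ is an isometric isomorphism from $\dot H^s(\T^2)$ onto $L^2(\T^2)$ when restricted to mean-zero functions, since both norms equal $\bigl(\sum_{k\ne 0}|k|^{2s}|\hat f(k)|^2\bigr)^{1/2}$. Therefore bounding $T$ on $\dot H^{s_\theta}$ is equivalent to bounding the conjugated operator $D^{s_\theta}TD^{-s_\theta}$ on $L^2$, and the whole task reduces to interpolating operator norms on the single space $L^2$.

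The main step is to set up an analytic family. For $z$ in the closed strip $\Sigma=\{0\le \mathrm{Re}(z)\le 1\}$, define
\begin{equation}
U(z)=D^{s(z)} T D^{-s(z)}, \qquad s(z) = zs_0+(1-z)s_1,
\end{equation}
acting on mean-zero functions, so that $s(\theta)=s_\theta$ and $\|U(\theta)\|_{L^2\to L^2}=\|T\|_{\dot H^{s_\theta}\to\dot H^{s_\theta}}$. On the boundary line $\mathrm{Re}(z)=0$, the multiplier $D^{iy(s_0-s_1)}$ has Fourier symbol $|k|^{iy(s_0-s_1)}$ of modulus one, so it is a simultaneous isometry of $L^2$ and $\dot H^{s_1}$. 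Writing $U(iy)f = D^{s_1}D^{iy(s_0-s_1)} T D^{-iy(s_0-s_1)} D^{-s_1} f$ and using successively the isometry $D^{-s_1}:L^2\to\dot H^{s_1}$, the isometry of $D^{-iy(s_0-s_1)}$ on $\dot H^{s_1}$, the hypothesis $\|T\|_{\dot H^{s_1}\to\dot H^{s_1}}=A_1$, the isometry of $D^{iy(s_0-s_1)}$ on $\dot H^{s_1}$, and finally the isometry $D^{s_1}:\dot H^{s_1}\to L^2$, I obtain $\|U(iy)\|_{L^2\to L^2}\le A_1$ uniformly in $y\in\R$. The analogous computation on the line $\mathrm{Re}(z)=1$ produces $\|U(1+iy)\|_{L^2\to L^2}\le A_0:=\|T\|_{\dot H^{s_0}\to\dot H^{s_0}}$.

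The conclusion then follows from the three-lines lemma applied to the scalar-valued function $F(z)=\langle U(z)f,g\rangle_{L^2}$ for mean-zero trigonometric polynomials $f,g$. Indeed, $F$ is then a finite linear combination of terms of the form $c_{k,\ell}|k|^{s(z)}|\ell|^{-s(z)}$, hence is holomorphic on $\Sigma$ and continuous and bounded on $\overline\Sigma$. The boundary operator-norm bounds translate via Cauchy--Schwarz into $|F(iy)|\le A_1\|f\|_{L^2}\|g\|_{L^2}$ and $|F(1+iy)|\le A_0\|f\|_{L^2}\|g\|_{L^2}$, and the three-lines lemma yields $|F(\theta)|\le A_0^\theta A_1^{1-\theta}\|f\|_{L^2}\|g\|_{L^2}$. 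Taking the supremum over $g$ gives the desired operator-norm bound on trigonometric polynomials, and density of trigonometric polynomials in each $\dot H^s$ extends it to all of $\dot H^{s_\theta}$.

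The main subtlety I anticipate is ensuring that the three-lines lemma actually applies, which requires holomorphy of $F(z)$ on the open strip and a mild growth bound on $\overline\Sigma$. Both hold automatically when $F(z)$ is restricted to trigonometric polynomials, since only finitely many Fourier modes contribute and each is an entire function of $z$; this is why testing against such $f,g$ at the start is the cleanest route, with the routine density argument deferred to the very end.
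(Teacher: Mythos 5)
Your proof is correct. Note that the paper does not actually prove Lemma~\ref{lem:interp}: it simply cites \cite[Corollary 3.2]{chandler2015interpolation}, which obtains the result from abstract Hilbert-space interpolation theory (the $K$-method and its coincidence with geometric interpolation of Hilbert scales). Your argument is therefore a genuinely different, self-contained route: you exploit the fact that on mean-zero functions of $\T^2$ the operator $D^s$ is a diagonal Fourier multiplier and an isometry $\dot H^s \to L^2$, conjugate $T$ to the analytic family $U(z) = D^{s(z)} T D^{-s(z)}$, and run the Hadamard three-lines lemma on $F(z) = \langle U(z) f, g\rangle$ for trigonometric polynomials $f,g$. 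The boundary bounds are right (at $\mathrm{Re}\,z = 0$ one has $s(iy) = s_1 + iy(s_0 - s_1)$ and the purely imaginary powers are unimodular multipliers, giving $A_1$; similarly $A_0$ at $\mathrm{Re}\,z = 1$), the exponents come out as $A_0^\theta A_1^{1-\theta}$ as claimed, and restricting to trigonometric polynomials indeed makes $F$ an entire, bounded function on the strip, so the three-lines lemma applies without any Stein-interpolation machinery. The only step worth spelling out is the final density argument: for $h \in \dot H^{s_\theta}$ approximated by trigonometric polynomials $h_n$, you should identify the $\dot H^{s_\theta}$-limit of $Th_n$ with $Th$ by observing that $h_n \to h$ also in $\dot H^{s_0}$ (since $|k| \ge 1$ on the mean-zero modes gives $\|\cdot\|_{\dot H^{s_0}} \le \|\cdot\|_{\dot H^{s_\theta}}$) and using the assumed boundedness of $T$ on $\dot H^{s_0}$. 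What your approach buys is an elementary, fully explicit proof tailored to the periodic setting; what the citation buys is generality (it applies to Hilbert scales without a Fourier diagonalization) at the cost of importing external theory.
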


Next, we have a homogeneous Kato-Ponce inequality; see for instance \cite[Problem 2.7]{MuscaluSchlag}.

\begin{lemma}\label{lem:Kato}
    Fix $s \in (0,1)$. There exists a constant $C_s$ such that for every $f,g \in C^\infty(\T^2)$ there holds 
    $$\|D^s(f g) - f D^s g\|_{L^2} \le C_s \|D^s f\|_{L^2}\|g\|_{L^\infty}.$$
\end{lemma}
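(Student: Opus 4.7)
My plan is to prove this classical homogeneous Kato--Ponce type commutator estimate using Bony's paraproduct decomposition together with the elementary symbol inequality
$$\bigl||\xi+\eta|^s-|\eta|^s\bigr|\le |\xi|^s\qquad\text{for }s\in(0,1),$$
which follows from the sub-additivity of $t\mapsto t^s$ on $[0,\infty)$. As motivation, the singular integral representation $D^s h(x)=c_s\,\text{p.v.}\!\int_{\T^2}(h(x)-h(y))|x-y|^{-2-s}\,dy$ yields the pointwise identity
$$[D^s,M_f]g(x)=c_s\int_{\T^2}\frac{(f(x)-f(y))g(y)}{|x-y|^{2+s}}\,dy,$$
which makes transparent that $\|g\|_{L^\infty}$ is the correct norm to pull out; however, a naive pointwise bound on the remaining kernel does not close against $\|D^s f\|_{L^2}$, so the argument is more naturally carried out on the Fourier side.

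Let $\{\Delta_k\}_{k\ge-1}$ be standard Littlewood--Paley projectors on $\T^2$ with low-frequency cutoffs $S_k=\sum_{j<k}\Delta_j$, and apply Bony's decomposition
$$fg=T_g f+T_f g+R(f,g),\qquad T_g f=\sum_k S_{k-3}g\cdot\Delta_k f,$$
(with analogous definitions of $T_f g$ and $R(f,g)$) both to $fg$ and to $f D^s g$. Subtracting splits the commutator into three pieces. The contribution from $T_g f$ is the easy one: since $D^s$ falls directly on the high-frequency factor $\Delta_k f$ and these blocks are almost orthogonal in $L^2$, Bernstein's inequality gives
$$\|D^s(T_g f)-T_g D^s f\|_{L^2}^2\lesssim \sum_k \|S_{k-3}g\|_{L^\infty}^2\|D^s\Delta_k f\|_{L^2}^2\lesssim \|g\|_{L^\infty}^2\|D^s f\|_{L^2}^2.$$

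The two remaining pieces, $D^s(T_f g)-T_f D^s g$ and $D^s R(f,g)-R(f,D^s g)$, are where the symbol estimate plays its role. On each dyadic block $\Delta_j f\cdot\Delta_k g$ the commutator acts by a Fourier multiplier of the form $|\xi+\eta|^s-|\eta|^s$, which by the displayed inequality is bounded pointwise by $|\xi|^s$, effectively transferring the $s$ derivatives onto the $f$-variable. After Cauchy--Schwarz in the dyadic index, this recovers $\|D^s f\|_{L^2}$, while the low-frequency truncations of $g$ are again absorbed into $\|g\|_{L^\infty}$ via Bernstein. The main obstacle is the bookkeeping needed to ensure that Fourier-support disjointness of the paraproduct blocks survives the commutator, so that square summability in the dyadic index is retained; this however follows from the standard low--high / high--high trichotomy of paraproduct analysis, and once it is in place the three pieces combine to yield the claimed constant $C_s$.
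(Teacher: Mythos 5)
First, a point of comparison: the paper does not prove this lemma at all --- it is recalled from the literature with the citation to Muscalu--Schlag, Problem 2.7 --- so your argument has to stand on its own rather than be matched against an in-paper proof. Your overall strategy (Bony decomposition plus analysis of the commutator symbol $|\xi+\eta|^s-|\eta|^s$ on each frequency block) is the standard and correct one, and your treatment of the interaction in which $f$ carries the high frequency is fine. As a bookkeeping remark, though, $T_g D^s f$ is not one of the Bony terms of $fD^sg$ (that term is $T_{D^s g}f$), so the three ``pieces'' as you have written them do not actually sum to the commutator; this is fixable but should be repaired.

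The genuine gap is in the two pieces you defer to ``bookkeeping.'' The subadditivity bound $\bigl||\xi+\eta|^s-|\eta|^s\bigr|\le|\xi|^s$ is not sufficient to sum the low--high piece $\sum_k [D^s,S_{k-3}f]\Delta_k g$. Transferring the $s$ derivatives onto $f$ using only that bound gives at best the block estimate $\|[D^s,S_{k-3}f]\Delta_kg\|_{L^2}\lesssim\|D^sS_{k-3}f\|_{L^2}\|\Delta_kg\|_{L^\infty}$, and after almost-orthogonality in $k$ you would need either $\sum_k\|\Delta_kg\|_{L^\infty}^2\lesssim\|g\|_{L^\infty}^2$ (false: this is the $B^0_{\infty,2}$ norm, and $L^\infty\not\subset B^0_{\infty,2}$) or control of $\sum_j\|D^s\Delta_jf\|_{L^2}$, i.e. the $\dot B^s_{2,1}$ norm, which is not dominated by $\|D^sf\|_{L^2}$. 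What actually closes this piece is the complementary mean-value estimate $\bigl||\xi+\eta|^s-|\eta|^s\bigr|\lesssim|\xi|\,|\eta|^{s-1}$ for $|\xi|\ll|\eta|$, which for $|\xi|\sim2^j$, $|\eta|\sim2^k$, $j<k-3$ produces the geometric off-diagonal factor $2^{js}2^{-(k-j)(1-s)}$ needed for Schur's test; this is precisely where $s<1$ enters and it is the heart of the proof, not bookkeeping. The high--high (remainder) piece has a similar issue: its outputs are not frequency-localized, the commutator symbol gives no gain there, and one must bound $D^sR(f,g)$ and $R(f,D^sg)$ by tracking the output frequency to extract a decaying factor $2^{(m-k)s}$. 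Finally, a pointwise bound on a bilinear symbol does not by itself imply boundedness of the corresponding bilinear operator on $L^2\times L^\infty$; on each block you need an explicit kernel or Fourier-series expansion of the symbol, or a Coifman--Meyer type theorem --- an additional ingredient your sketch does not supply.
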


\addcontentsline{toc}{section}{References}
	\bibliographystyle{abbrv}
	\bibliography{ADbib}

\end{document}